\newcommand{\bburl}[1]{\textcolor{blue}{\url{#1}}}
\newtheorem{thm}{Theorem}[section]
\newtheorem{cor}[thm]{Corollary}
\newtheorem{lem}[thm]{Lemma}
\newtheorem{prop}[thm]{Proposition}
\theoremstyle{definition}
\theoremstyle{definition}
\theoremstyle{remark}
\newcommand\be{\begin{equation}}
\newcommand\ee{\end{equation}}
\newcommand\ben{\begin{enumerate}}
\newcommand\een{\end{enumerate}}
\newcommand{\R}{\ensuremath{{\mathbb R}}}
\newcommand{\C}{\ensuremath{{\mathbb C}}}
\newcommand{\HH}{{\mathbb H}}
\newcommand{\OO}{{\mathcal O}}
\newcommand{\Mod}[1]{\ (\mathrm{mod}\ #1)}
\newcommand\widebar[1]{\mathop{\overline{#1}}}
\newenvironment{psmallmatrix}{\left(\begin{smallmatrix}}{\end{smallmatrix}\right)}
\numberwithin{equation}{section}
\title{A first Kronecker limit formula for Kleinian groups}
\author{Zihan Miao}
\email{zmiao@smith.edu}
\address{Smith College, Burton Hall 115, Northampton, MA 01063, USA}
\author{Anh Nguyen}
\email{anguyen29@smith.edu}
\address{Smith College, Burton Hall 115, Northampton, MA 01063, USA}
\author{Tian An Wong}
\email{tiananw@umich.edu}
\address{University of Michigan-Dearborn, 4901 Evergreen Rd, 2002 CASL Building, Dearborn, MI 48128, USA}
\subjclass[2010]{11F20 \and 11F37}
\keywords{Elliptic Dedekind sum, Bianchi modular forms, Eisenstein series, Kronecker limit formula}
\begin{document}

\begin{abstract}We prove a first Kronecker limit formula for cofinite discrete subgroups of SL$(2,\mathbb{C})$, also called Kleinian groups, generalizing a method of Goldstein over SL$(2,\mathbb R)$. The proof uses the Fourier expansion of Eisenstein series, leading to an analogue of the logarithm of the Dedekind eta function, and whose transformation law produces a function analogous to one related to Dedekind sums. We derive several basic properties of this new function.\end{abstract}

\maketitle


\section{Introduction}
\subsection{The first Kronecker limit formula and Dedekind sums}

The first Kronecker limit formula states that for any point $z$ in the upper-half plane, 
\[
\lim_{s\to1}\Big(\sum_{\substack{c,d=-\infty\\ (c,d)\neq (0,0)}}^\infty y^s|cz + d|^{-2s} - \frac{\pi}{s-1}\Big) = 2\pi(\gamma - \log2 - \log(\sqrt{y}|\eta(z)|^2)),
\]
where $\gamma$ is  the Euler-Mascheroni constant, and $\eta(z)$ the Dedekind eta function. The eta function satisfies the transformation law
\be
\label{trans}
\log \eta\left(\frac{az+b}{cz+d}\right) = \log\eta(z)+\frac12\log(cz + d) + \pi \sqrt{-1} S(a,b,c,d), \quad \begin{pmatrix}
 a& b \\ c & d 
\end{pmatrix}\in \mathrm{SL}(2,\R),
\ee
where 
\[
S(a,b,c,d) = \frac{a+d}{12c}-\frac14\frac{c}{|c|}-s(c,d)
\]
and $s(c,d)$ is the classical Dedekind sum, which can be defined as
\[
s(c,d) = \sum_{k=1}^{|c|-1}B_1\left(\frac{\pi k}{d}\right)B_1\left(\frac{\pi kc}{d}\right),
\]
where $B_1$ is the first periodic Bernoulli function, and all branches of the logarithm are the principal branch. 

Using an analogue of the first Kronecker limit formula for Eisenstein series associated to cofinite (but not cocompact) discrete subgroups $\Gamma$ of PSL$(2,\R)$, also called Fuchsian groups of the first kind, Goldstein defined a family of half-integer weight automorphic forms $\eta_{\Gamma,i}(z)$ analogous to the Dedekind eta function for each cusp $i$ of $\Gamma$ \cite{GLD}. Then using the transformation property of $\eta_{\Gamma,i}$, Goldstein constructed a family functions $S_{\Gamma,i}$ analogous to the functions $S(a,b,c,d)$ attached to $\Gamma$ and proved that it satisfies properties similar to the latter. On the other hand, the convergence of the limit formula was not proved, so it was referred to as a formal limit formula.

\subsection{Extension to imaginary quadratic fields}

In this paper, we adapt Goldstein's method to derive a first Kronecker limit formula for Eisenstein series attached to cofinite discrete subgroups $\Gamma$ of PSL$(2,\C)$, also known as Kleinian groups, that are not cocompact. Let 
\[
u = z + rj, \qquad z\in\C,\ r>0\in\R,\ j^2=-1
\]
be a point in the upper-half space $\HH$, and let $s\in\C$. Also let $\zeta_1,\dots,\zeta_h$ be the set of inequivalent cusps of $\Gamma$. We then derive a limit formula for $\Gamma$ at the cusp $\zeta_i = A^{-1}_i\infty$ with $A_i\in \mathrm{PSL}(2,\C)$ using the Fourier expansion of the Eisenstein series on $\Gamma$ associated to $\zeta_i$,
\[
 \lim_{s \to 1}\left(E_i(A_i^{-1}u,s) - \frac{|\Lambda_i'|\rm{vol}(\Gamma)^{-1}}{s-1} \right)   = \frac{\pi}{|\Lambda_i|}b_{ii} - \left(\frac{|\Lambda_i'|}{\mathrm{vol}(\Gamma)}+\frac{\pi}{|\Lambda_i|}\right)\log |r\cdot\eta_{\Gamma,i}(u)^2|,
\]
where $E_i(u,s)$ is the associated Eisenstein series and $\eta_{\Gamma,i}(u)$ is our analogue of the Dedekind eta function, defined in \eqref{eta}; we refer to Theorem \ref{klf} for other undefined notation. 
As one might expect, the proof of this result relies on the analytic theory of (scalar-valued) Eisenstein series associated to $\Gamma$, whose required properties have been obtained by Elstrodt, Grunewald, and Mennicke \cite{EGM1}. 

With this limit formula, we then prove a transformation property for $\eta_{\Gamma,i}(u)$, 
\[
\log\eta_{\Gamma,i}(M u) = \log\eta_{\Gamma,i}(u) + \frac12\log (|cz+d|^2+|c|^2r^2) + \pi \sqrt{-1} D_{\Gamma,i}(M),
\]
for any
\[
M = \begin{pmatrix}
 a& b \\ c & d 
\end{pmatrix} \in A_i\Gamma A_i^{-1}
\]
in Proposition \ref{transdef}, where we define the function $D_{\Gamma,i}(M)$ as a function related to generalized elliptic Dedekind sums, analogous to Goldstein's $S_{\Gamma,i}$ and $S(a,b,c,d)$ above. By Corollary \ref{hom}, it is a homomorphism from $A_i\Gamma A_i^{-1}$ to $\R$. Unfortunately, we have not yet been able to evaluate $ D_{\Gamma,i}(M)$ explicitly, so it is not clear whether it is specializes to elliptic Dedekind sums $D(c,d)$ in the case of $\Gamma = \text{PSL}(2,\mathcal O_K)$. 

\subsection{Relation to other work} 

As a point of reference, we compare our results with the work of Ito \cite{ito}. Let $\Lambda$ be a lattice in $\C$, and $\mathcal{O}_\Lambda =\{m\in\C:m\Lambda\subset \Lambda\}$ its ring of multipliers. Elliptic Dedekind sums are the analogues of Dedekind sums for PSL$(2, \mathcal O_\Lambda)$. Introduced by Sczech \cite{sczech}, they are defined as
\[
D(c,d) = \frac{1}{d}\sum_{k\in \Lambda/d\Lambda}E_1\left(\frac{ck}{d}\right)E_1\left(\frac{k}{d}\right)
\]
for $c,d\in\mathcal{O}_\Lambda$ with $d\neq0$, where $E_k(x)$
is the Eisenstein-Kronecker series. Ito constructed a function $H(u)$ analogous to the imaginary part Im$(\log\eta(z))$, such that
\[
H(Mu) = H(u) + \left(2iE_2(0)\text{Im}\Big(\frac{a+d}{c}\Big) - D(a,c)\right)
\]
when $c\neq 0$. Thus up to a constant, the expression in parentheses is analogous to our $D_{\Gamma,i}(M)$, as in our proof of Proposition \ref{transdef}. Indeed, it is known that $D(c,d)$ is purely imaginary. At the same time, it is not immediately clear whether the functions coincide, as Ito's method uses vector-valued Eisenstein series.

Elstrodt-Grunewald-Mennicke  \cite[Theorem 5.3]{EGM2} proved a first Kronecker limit formula and constructed an analogue $g(u)$ of $\log|\eta(z)|$ for $\Gamma = \mathrm{PSL}(2,\mathcal O_K)$, while Asai obtained a similar result for arbitrary number fields with class number one \cite{Asai}. We compare our limit formula with the former in Section \ref{egm}, In particular, we show that $g(u)$ can be expressed as a linear combination of our $\log|\eta_{\Gamma,i}(u)|$. We also note more recent work that studies an application of the limit formula for PSL$(2,\mathcal O_K)$ to a Jensen-Rohrlich type formula \cite{HIPT}. 

In view of classical results, one can also ask if the function $D_{\Gamma,i}$ is related to a 1-cocycle on $\Gamma$ such as shown by Sczech for $\Gamma=\mathrm{PSL}(2,\mathcal O_K)$ \cite{sczech}. It would then parametrize special values of Hecke $L$-functions of imaginary quadratic fields, as has been shown in Ito \cite{ito} and in higher rank by the last author \cite{fkw}. Moreover, as variations on Goldstein's work have recently been used to obtain numerous applications of generalized Dedekind sums on PSL$(2,\mathbb{R})$ \cite{b1,SVY,b3,b2}, it would thus be interesting to explore the analogues of these results for imaginary quadratic fields.

\section{Preliminaries} 

\subsection{Group action on the upper half space}
We first recall some basic definitions and fix notation. Let $\HH=\C \times (0,\infty) $ be the usual upper-half space model of  three-dimensional hyperbolic space. A point $u\in\HH$ can be represented by
\[
    u=(x,y,r)=(z,r)=z+rj, \qquad x,y\in\R, r>0,z=x+ \sqrt{-1} y\in\C,
\]
where $1,i,j,k$ form the usual basis of quaternions, and where we view the points $u\in\HH$ as quaternions with fourth component equal to zero. 
The group SL$(2,\C)$ acts on the upper half space $\HH$ by linear fractional transformation
\[
    u \mapsto Mu = (au+b)(cu+d)^{-1}, \quad     M = \begin{pmatrix}
     a & b\\
     c & d
    \end{pmatrix}
    \in \text{SL}(2,\C),
\]
where the right-hand side can be viewed as taking place in the skew-field of quaternions.  More explicitly, we write
\[
    Mu=z_M+r_Mj,
\]
where
\be
\label{rM}
    z_M=\frac{(az+b)(\widebar{cz+d})+a\widebar{c}r^2}{|cz+d|^2+|c|^2r^2} \text{ and }\quad r_M=\frac{r}{|cz+d|^2+|c|^2r^2}.
\ee
The projective linear group  $\mathrm{PSL}(2,\C)$ can be viewed as the group of all orientation-preserving isometries for the hyperbolic metric, and every discrete subgroup $\Gamma$ of  PSL$(2,\C)$ acts on $\HH$ discontinuously. 

Let $\infty$ be the point at infinity. Then given another point $\zeta\in \mathbb{P}^1(\C)$, there exists an element $A \in \mathrm{PSL}(2,\C)$ such that $\zeta = A^{-1}\infty$. Given any such $\zeta$, we define the stabiliser of $\zeta$ in $\Gamma$ to be 
\[
\Gamma_{\zeta} = \{M \in \Gamma : M\zeta = \zeta\}
\]
and its maximal unipotent subgroup is 
\[
\Gamma'_{\zeta} = \{M \in \Gamma_{\zeta} : \mathrm{tr}(M) = \pm 2\}.
\]
We then have that
\[
    (A \Gamma A^{-1})_{\infty}=A \Gamma_{\zeta} A^{-1}\text{ and } (A \Gamma A^{-1})'_{\infty}=A \Gamma'_{\zeta} A^{-1}.
\]
We call an element $M\in \mathrm{SL}(2,\C)$ parabolic if $M \neq \pm I$ and tr$(M)=\pm2$. Suppose that $\zeta = A^{-1} \infty$ is fixed by some parabolic element $M\in\Gamma$, $M \neq I$. Then the stabilizer of $\infty$ in $A \Gamma A^{-1}$ contains an element
\[
\begin{pmatrix}
     1 & \omega\\
     0 & 1
\end{pmatrix}, \omega\neq0,
\]
and we call $\zeta$ a cusp of $\Gamma$ if and only if the set 
\[
\Lambda := \left\{\omega: 
\begin{pmatrix}
     1 & \omega\\
     0 & 1
\end{pmatrix} 
\in A\Gamma A^{-1}\right\}
\]
is a lattice in $\C$. If $\Gamma$ is cofinite, then all fixed points of parabolic elements of $\Gamma$ are cusps, thus $\zeta$ is a cusp of $\Gamma$ when $\Gamma_{\zeta}$ contains a parabolic element. We write $\eta \equiv \zeta \Mod{\Gamma}$ if there is an element $L_0 \in \Gamma$ such that $\eta = L_0 \zeta$.

For example, let $\mathcal{O}_K$ be the ring of integers of an imaginary quadratic number field $K=\mathbb{Q}(\sqrt{D})$ for some square-free integer $D<0$, and let $h$ be the class number of $K$. Then the set of cusps of $\Gamma=\mathrm{PSL}(2,\mathcal{O}_K)$ is equal to $\mathbb{P}^1(K) = K \cup \{\infty\}$ and $\mathbb{P}^1(K)$ splits into $h$ orbits with respect to $\Gamma$.

\subsection{A Double Coset Decomposition}
We would like to express the Fourier expansion of Eisenstein series using a decomposition of $\Gamma$ into double cosets with respect to the stability groups of the cusps. In preparation, we shall describe a Bruhat-type decomposition. Let $\Gamma$ be a discrete subgroup of PSL$(2,\C)$. Chose two cusps $\zeta$ and $\eta$ of $\Gamma$ with corresponding scaling matrices $A$ and $B$ in PSL$(2,\C)$, so that $\zeta = A^{-1} \infty$ and $\eta = B^{-1} \infty$. Let $\Lambda\subset\C$ be the lattice such that 
\begin{equation}
    B \Gamma'_{\eta} B^{-1} = (B \Gamma B^{-1})'_\infty = \left\{ \begin{pmatrix}
     1 & \omega\\
     0 & 1
     \end{pmatrix}
     : \omega \in \Lambda \right\},
\end{equation}
where $\Gamma_{\zeta}$ and $\Gamma_{\eta}$ are the stability groups of the respective cusps. Let us write
\[
    \Omega_A = A\Gamma_{\zeta} A^{-1}\text{ and } \Omega_B = B\Gamma_{\eta} B^{-1}.
\]
We also define the subset the upper triangular matrices, i.e., those having a fixed point at $\infty$, denoted by
\[
\Omega_{\infty}=\left\{ \begin{pmatrix}
     \cdot & \cdot\\
     0 & \cdot
    \end{pmatrix}
    \in A\Gamma B^{-1} \right\},
\]
and also denote $\Omega_{d/c} =  \Omega_A \omega_{d/c} \Omega_B$ indexed by the matrices 
\[
    \omega_{d/c} = 
        \begin{pmatrix}
         \cdot & \cdot\\
         c & d
        \end{pmatrix}
        \in A\Gamma B^{-1}
\]
with $c \neq 0$. 

We shall partition the set $A\Gamma B^{-1}$ into double cosets with respect to these sets. 
\begin{lem}
\label{doublecoset}
Let $\zeta$, $\eta$ be cusps for $\Gamma$. Then we have the disjoint union: 
    \begin{align}
    A\Gamma B^{-1} 
    &= \delta_{\eta,\zeta}\Omega_\infty
    \, \cup \bigcup_{c \neq 0} \bigcup_{d \bmod{c\Lambda}}  \Omega_{d/c},
    \end{align}
where $\delta_{\eta,\zeta}$ equals $1$ if $\eta \equiv \zeta \Mod{\Gamma}$ and $0$ otherwise.
\end{lem}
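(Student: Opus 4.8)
The plan is to partition $A\Gamma B^{-1}$ according to the lower-left entry $c$ of its elements, treating $c=0$ and $c\neq 0$ separately. A matrix $\begin{psmallmatrix} a & b \\ c & d\end{psmallmatrix}$ fixes $\infty$ precisely when $c=0$, so the elements with $c=0$ are exactly those in $\Omega_\infty$ by definition. Moreover $A\gamma B^{-1}$ fixes $\infty$ if and only if $\gamma$ carries $\eta = B^{-1}\infty$ to $\zeta = A^{-1}\infty$, i.e. if and only if $\eta\equiv\zeta\Mod{\Gamma}$; hence $\Omega_\infty$ is nonempty exactly when $\delta_{\eta,\zeta}=1$, which accounts for the first term in the decomposition.

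For the part with $c\neq 0$, the key point is that the bottom row nearly determines the double coset. First I would show that any two elements $g_1,g_2\in A\Gamma B^{-1}$ with the same bottom row $(c,d)$ lie in the same left $\Omega_A$-coset: one has $g_1 g_2^{-1}\in A\Gamma A^{-1}$, and a direct $2\times 2$ computation using $\det = 1$ shows $g_1 g_2^{-1}$ is upper triangular, so it fixes $\infty$ and therefore lies in $(A\Gamma A^{-1})_\infty = \Omega_A$. Next, right multiplication by $\begin{psmallmatrix} 1 & \omega \\ 0 & 1\end{psmallmatrix}\in\Omega_B$ with $\omega\in\Lambda$ sends $(c,d)\mapsto(c,d+c\omega)$, so that within a double coset the entry $d$ is only well-defined modulo $c\Lambda$. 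Together these identify $\Omega_{d/c}=\Omega_A\omega_{d/c}\Omega_B$ with the set of all elements of $A\Gamma B^{-1}$ whose bottom row is congruent to $(c,d)$ modulo this translation action, giving the indexing by $d\bmod c\Lambda$.

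Covering is then immediate, since every element with $c\neq 0$ has a well-defined bottom row and hence lies in the corresponding $\Omega_{d/c}$, while disjointness follows from the fact that the bottom row reduced modulo $c\Lambda$ is an invariant of $\Omega_{d/c}$. The main obstacle, and the step requiring the most care, is the interaction with the finite rotation part of the cusp stabilizers: left multiplication by a diagonal unit $\begin{psmallmatrix}\epsilon & 0 \\ 0 & \epsilon^{-1}\end{psmallmatrix}\in\Omega_A$ sends $(c,d)\mapsto(\epsilon^{-1}c,\epsilon^{-1}d)$, and the analogous elements of $\Omega_B$ rescale $c$ by a further unit, so the lower-left entry $c$ is only well-defined up to the group of such units. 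To obtain a genuinely disjoint union one must let $c$ range over representatives modulo these units; when the cusps carry no nontrivial rotation (as for generic imaginary quadratic fields) the statement holds with $c$ ranging over all nonzero values as written. I would then check that, after this normalization, the bottom row modulo translations and units is a complete invariant, so that the map from double cosets to the index set is a bijection, completing both covering and disjointness.
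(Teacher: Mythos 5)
Your argument follows the same route as the paper's: the elements with $c=0$ are exactly $\Omega_\infty$, which is nonempty precisely when $\eta\equiv\zeta\Mod{\Gamma}$, and for $c\neq0$ the conjugation relation by translations shows that $d$ is only determined modulo $c\Lambda$ within a double coset. Your extra step --- that two elements of $A\Gamma B^{-1}$ with the same bottom row satisfy $g_1g_2^{-1}\in A\Gamma A^{-1}$ and are upper triangular, hence lie in the same left coset --- is correct and in fact gives slightly more, since the determinant condition forces $g_1g_2^{-1}$ to be unipotent; the paper leaves this verification implicit. Your caveat about the rotation part of the stabilizers is a genuine observation rather than a flaw in your own argument: as the lemma is literally stated, $\Omega_A=A\Gamma_\zeta A^{-1}$ and $\Omega_B=B\Gamma_\eta B^{-1}$ are the \emph{full} stabilizers, whose non-unipotent elements $\begin{psmallmatrix}\epsilon & *\\ 0 & \epsilon^{-1}\end{psmallmatrix}$ rescale $c$ by units, so for groups such as $\mathrm{PSL}(2,\mathbb{Z}[i])$ (where $[\Gamma_\infty:\Gamma_\infty']=2$) the indexed union covers $A\Gamma B^{-1}$ but is not literally disjoint. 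The paper's own proof only ever multiplies by translations, i.e.\ it tacitly works with the maximal unipotent subgroups $\Gamma_\zeta'$, $\Gamma_\eta'$ --- which is also the version used downstream, where the Fourier coefficients are computed via double cosets in $A_i\Gamma_{\zeta_i}'A_i^{-1}\backslash A_i\Gamma A_j^{-1}/A_j\Gamma_{\zeta_j}'A_j^{-1}$. Under that reading your computation shows the bottom row $(c,d\bmod c\Lambda)$ is a complete invariant and the disjointness holds exactly as written, with no further quotient by units required; so rather than restricting $c$ to representatives modulo units, the cleaner fix is to take $\Omega_A$ and $\Omega_B$ to be the unipotent parts throughout.
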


\begin{proof}
As mentioned before, we have two cusps $\zeta$ and $\eta$ for $\Gamma$ with the corresponding scaling matrices $A$ and $B$ respectively. Suppose first that $\Omega_{\infty}$ is not empty. Then there exists an element $\omega_{\infty} = A\gamma B^{-1}$ in $\Omega_{\infty}$ with $\gamma\in\Gamma$, so that
\[
    \gamma \eta = A^{-1} \omega_{\infty} B \eta
    = A^{-1} \omega_{\infty} \infty
    = A^{-1} \infty = \zeta.
\]
This shows that the cusps $\eta$ and $\zeta$ are equivalent, the stability groups are conjugate, and $\omega_{\infty}$ is a translation. Let $\omega_1=A \gamma_1 B^{-1}$ be another element of $\Omega_{\infty}$, so that we have
\[
    \gamma \gamma_1^{-1} \zeta = A^{-1} \omega_{\infty} {\omega_1}^{-1} A \zeta
    = A^{-1} \omega_{\infty} {\omega_1}^{-1} \infty
    = A^{-1} \infty
    = \zeta.
\]
This shows that $\gamma \gamma_1^{-1} \in \Gamma_{\zeta}'$, and we have that
\[
    \omega_{\infty} {\omega_1}^{-1} = A \gamma \gamma_1^{-1} A^{-1}\in A \Gamma_{\zeta} A^{-1}.
\]
Therefore $\Omega_{\infty}$ is not empty if and only if $\eta$ and $\zeta$ are equivalent. We also note that we can write $    \Omega_{\infty} = \Omega_A \omega_{\infty} \Omega_B$ for some 
\[
    \omega_{\infty} = \begin{pmatrix}
         1 & \cdot\\
         0 & 1    \end{pmatrix}
         \in A\Gamma B^{-1},
\]
so, there exists a matrix in $\Gamma$ such that $c=0$ if and only if $\eta$ and $\zeta$ are equivalent. 

All the other elements of $AMB^{-1}$ fall into the double cosets 
\[
    \Omega_{d/c} =  \Omega_A \omega_{d/c} \Omega_B.
\]
For $\omega \in \Lambda$, we have the relation
\[
    \begin{pmatrix}
         1 & \omega'\\
         0 & 1
    \end{pmatrix}
    \begin{pmatrix}
         a & \cdot\\
         c & d
    \end{pmatrix}
    \begin{pmatrix}
         1 & \omega\\
         0 & 1
    \end{pmatrix}
    = \begin{pmatrix}
         a+c\omega' & \cdot\\
         c & d+c\omega
    \end{pmatrix}.
\]
The double cosets are thus uniquely determined by $c$, while $d$ is determined up to translation by $c\Lambda$.  For $c\neq0$, each distinct element $\omega \in \Lambda$ generates a coset from $\Omega_A\backslash A\Gamma B^{-1}$, and morevoer $\Omega_{d/c}$ is independent of the upper row of $\omega_{d/c}$.
\end{proof}

\section{Fourier expansion of Eisenstein series}

We next recall some basic facts about Eisenstein series on $\Gamma$, referring to \cite{EGM1} for details. Choose $A_1,\dots,A_h\in \mathrm{PSL}(2,\C)$ such that $\zeta_1=A_1^{-1}\infty,\dots, \zeta_h=A_h^{-1}\infty$ is a complete set of representatives of the $\Gamma$-equivalence classes of cusps of $\Gamma$. The Eisenstein series on $\Gamma$ associated to the cusp $\zeta_i$ is defined by
\[
E_{A_i}(u,s)=\sum_{M\in\Gamma'_{\zeta_i}\setminus\Gamma} (r_{A_iM})^{1+s},
\]
where $r_{A_iM}$ is defined as above. When the context is clear, we shall write $E_i(u,s) = E_{A_i}(u,s).$ The series converges absolutely and uniformly on compact subsets of $\HH\times\{s:\text{Re}\,s>1\}$ and is a $\Gamma$-invariant function on $\HH$. We note that, up to scaling, the series is independent of the choice of $A_i$ in the sense that given another $B_i\in \mathrm{PSL}(2,\C)$ such that $B_i\zeta_i=\infty$, we have that
\[
B_i = \begin{pmatrix}
 a & b \\ 0 & a^{-1}
\end{pmatrix} A_i,
\]
with $a\neq0$, and $E_{B_i}(u,s) = |a|^{2+2s}E_{A_i}(u,s)$. Moreover, if $A_i$ and $B_i$ are chosen such that $(A_i\Gamma A_i^{-1})'_\infty$ and $(B_i\Gamma B_i^{-1})'_\infty$ have fundamental parallelograms of Euclidean area one, then $|a|=1$. 

Denote by $|\Lambda|$ the area of a fundamental parallelogram associated to the lattice $\Lambda$, and $\mathrm{vol}(\Gamma)$ the covolume of $\Gamma$. For example, if $\Gamma = \text{PSL}(2,\OO_K),$ a well-known result of Humbert shows that 
\be
\label{humb}
\mathrm{vol}(\Gamma) = {|d_K|^{3/2}}\zeta_K(2)/(4\pi^2),
\ee
where $d_K$ is the discriminant of $K$ and $\zeta_K(s)$ is the Dedekind zeta function of $K$ \cite[Theorem 10.1]{EGM2}. The map $z+rj\mapsto r^{1+s}$ satisfies the differential equation
\be
\label{dr}
-\Delta r^{1+s} = (1-s^2)r^{1+s},
\ee
where 
\[
\Delta = r^2\left(\frac{\partial^2}{\partial^2 x}+\frac{\partial^2}{\partial^2 y}+\frac{\partial^2}{\partial^2 r} \right) - r\frac{\partial}{\partial r}
\]
is the usual Laplace-Beltrami operator acting on $\HH$, so by local uniform convergence it follows also that
\[
-\Delta E_i(u,s)= -(1-s)^2E_i(u,s)
\]
for Re$(s)>1$.  $E_i(u,s)$ is an automorphic function of $\Gamma$ in the sense of Definition 3.3.5 and following of \cite{EGM1}. We now recall the following properties about Eisenstein series that we shall require. For the remainder of this paper, when we say $\Gamma$ is cofinite, we shall also assume that it is not cocompact.

\begin{prop}
\label{Eis}
 Let $\Gamma$ be a cofinite discrete sugbroup of $\mathrm{PSL}(2,\C)$, and let $\zeta_i =A^{-1}_i\infty$ with associated lattice $\Lambda_i$ with dual lattice $\Lambda_i'$. Then 
 \begin{enumerate}
     \item[(i)]
 $E_{i}(u,s)$ has meromorphic continuation to the whole $s$-plane,
 \item[(ii)]
 it is holomorphic for $\mathrm{Re}(s)>0$, except for a finite number of simple poles on the real line $(0,1]$, and
 \item[(iii)]
 the simple pole at $s=1$ has residue equal to $|\Lambda_i'|\mathrm{vol}(\Gamma)^{-1}$.
 \end{enumerate}
\end{prop}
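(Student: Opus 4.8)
The three assertions are the standard analytic properties of the scalar-valued Eisenstein series attached to a cofinite Kleinian group, so my plan is to deduce them from the spectral theory of the Laplace--Beltrami operator $\Delta$ on $L^2(\Gamma\backslash\HH)$ developed by Elstrodt, Grunewald, and Mennicke \cite{EGM1}, after matching their normalization with ours (exponent $1+s$, pole at $s=1$). For (i), I would invoke their resolvent/Fredholm continuation: one first extends $E_i(u,s)$ slightly past the abscissa $\mathrm{Re}(s)=1$ through the meromorphic continuation of its constant terms, and then globally via the analytic Fredholm theorem applied to the resolvent of the truncated Laplacian (equivalently, Colin de Verdi\`ere's self-adjoint-extension method). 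This is the genuinely hard analytic input, and I would lean entirely on \cite{EGM1} rather than reprove it. For (ii), I would argue spectrally: by \eqref{dr} the eigenvalue attached to $E_i(u,s)$ is $1-s^2$, so the continuous spectrum $\{1-s^2:s\in i\R\}=[1,\infty)$ sits on the line $\mathrm{Re}(s)=0$, while the strip $0<\mathrm{Re}(s)\le 1$ records the possible exceptional eigenvalues in $[0,1)$. The poles of $E_i$ in $\mathrm{Re}(s)>0$ coincide with those of the scattering (constant-term) matrix; self-adjointness of $\Delta$ forces any such pole onto the real segment $(0,1]$, finiteness follows from discreteness of the point spectrum below the continuous spectrum, and simplicity is standard. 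This too is exactly the content of \cite{EGM1}.

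The part I would carry out in detail is (iii). Since $s=1$ corresponds to eigenvalue $0$ and the residue $\rho_i:=\mathrm{Res}_{s=1}E_i(\cdot,s)$ is an $L^2$-eigenfunction for this eigenvalue, it is a constant. To pin it down I would integrate the eigenvalue equation $\Delta E_i=(s^2-1)E_i$ over a fundamental domain $\mathcal{F}_Y$ truncated at height $Y$ in the coordinates adapted to each cusp. Because $dv=r^{-3}\,dx\,dy\,dr$ and $\Delta f=r^{3}\partial_\mu(r^{-1}\partial_\mu f)$, the integrand $\Delta E_i\,dv$ is a Euclidean divergence, so the divergence theorem converts $\int_{\mathcal{F}_Y}\Delta E_i\,dv$ into a sum of boundary integrals over the horizontal slices $\{r=Y\}$ at the cusps; the vertical sides cancel by $\Gamma$-invariance, and only the constant Fourier term $\delta_{ij}r^{1+s}+\varphi_{ij}(s)r^{1-s}$ survives each slice. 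Letting $s\to1$ and using $\mathrm{vol}(\mathcal{F}_Y)=\mathrm{vol}(\Gamma)-\sum_j\frac{|\Lambda_j|}{2Y^2}$, the $Y$-dependent terms cancel and one is left with a relation of the shape $\rho_i\,\mathrm{vol}(\Gamma)=(\text{cuspidal normalization of }\zeta_i)$, which after tracking the lattice constants yields the stated value $\rho_i=|\Lambda_i'|\,\mathrm{vol}(\Gamma)^{-1}$.

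The main obstacle is twofold. The serious analytic difficulty---meromorphic continuation and the location of the poles---I would not reprove; it is furnished by \cite{EGM1}, and the real labor on our side is verifying that $E_i$, defined with exponent $1+s$ and with the scaling of $A_i$ fixed so the cusp cross-section has the prescribed covolume, matches their normalized object (recall $E_{B_i}=|a|^{2+2s}E_{A_i}$, so the residue is scaling-sensitive and must be reconciled carefully). The delicate bookkeeping point is then the exact constant in (iii): one must be precise about the factor relating $|\Lambda_i|$ to its dual $|\Lambda_i'|$ and about the normalization of the zeroth Fourier coefficient, since these are exactly what distinguish $|\Lambda_i'|\,\mathrm{vol}(\Gamma)^{-1}$ from a naive cross-sectional volume. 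I would cross-check the constant against the explicit constant term obtained from the double-coset decomposition of Lemma \ref{doublecoset}, which exhibits $\varphi_{ij}(s)$ as a lattice zeta function whose residue at $s=1$ supplies the factor directly.
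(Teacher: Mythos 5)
Your proposal is correct and follows essentially the same route as the paper, whose entire proof is a citation of Theorems 6.1.2 and 6.1.11 of \cite{EGM1} for all three assertions. Your additional sketch of (iii) via integrating the eigenvalue equation over a truncated fundamental domain is the standard residue computation already contained in the cited Theorem 6.1.11, so it adds sound detail but no new ingredient beyond what the paper invokes.
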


\begin{proof}
This follows from Theorems 6.1.2 and 6.1.11 \cite{EGM1}.
\end{proof}

Now, let $\zeta_i=A^{-1}_i\infty$, $\zeta_j=A^{-1}_j\infty$ be distinct cusps of $\Gamma$, let $\Lambda_j$ be the lattice in $\C$ such that 
\[
     A_j\Gamma_{\zeta_j}'A_j^{-1} = (A_j\Gamma A_j^{-1})_{\infty}' = \left\{ 
     \begin{pmatrix}
     1 & \omega\\
     0 & 1
     \end{pmatrix}
     : \omega \in \Lambda_j
     \right\},
\]
and let $\Lambda'_j$ be the dual lattice of $\Lambda_j$. The Eisenstein series $E_{i}(A_j^{-1}u,s)$ is $\Gamma$-invariant, so it can be expanded in a Fourier series in the neighbourhood of the cusp $\zeta_i$,
\[
E_i(A_j^{-1}u,s) = \sum_{\omega'\in\Lambda'_j}a_{ij,\omega'}(r,s)q^{\omega'},
\]
where $q^{\omega'} =e(\langle \omega', z\rangle),$ with $e(x) = e^{2\pi \sqrt{-1} x}$ and $\langle \cdot, \cdot \rangle$ the Euclidean scalar product in $\C$. Recall that $\zeta_i \equiv \zeta_j \Mod{\Gamma}$ if there is an element $L_0 \in \Gamma$ such that $\zeta_i = L_0 \zeta_j$, so that 
\[
    A_jL_{0}A_i =
    \begin{pmatrix}
     \cdot & \cdot\\
     0 & d_0
     \end{pmatrix}.
\]
Moreover, we define the Dirichlet series
\[
\phi_{ij,\omega'}(s) =  \sum_{c\neq 0}\sum_{d\bmod c\Lambda_j} \frac{e(\langle \omega', \frac{d}{c} \rangle)}{|c|^{2+2s}},\qquad \omega'\in\Lambda_j'.
\]
For example, if $\Gamma = \mathrm{PSL}(2,\mathbb{Z}[i])$, then $\phi_{ii,0}(s) = \zeta_K(s)/\zeta_K(s+1)$. 

The Fourier coefficients $a_{ij,\omega'}(r,s)$ have been computed explicitly in \cite{EGM2}, which we record in the following formulation, using our double coset decomposition.
\begin{lem}
Let $\mathrm{Re}(s)>1$. Then $E_i(A_j^{-1}u,s)$ has the Fourier expansion with Fourier coefficients
\begin{equation}
\label{a0}
    a_{ij,0}(r,s) = \delta_{ij} [\Gamma_{\zeta_i}:\Gamma_{\zeta_i}'] |d_0|^{-2-2s} r^{1+s} + \frac{\pi}{s|\Lambda_j|}\phi_{ij,0}(s)r^{1-s},
\end{equation}
and
\[
    a_{ij,\omega'}(r,s)= \frac{2\pi^{1+s}}{|\Lambda_j|\Gamma(1+s)} |\omega'|^{s} \phi_{ij,\omega'}(s)r K_s(2\pi|\omega'|r), \qquad \omega'\neq0,
\]
where $K_s$ denotes the modified Bessel function of the second kind. 
\end{lem}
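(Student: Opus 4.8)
The plan is to compute the Fourier coefficients $a_{ij,\omega'}(r,s)$ directly from the definition of the Eisenstein series by integrating over a fundamental parallelogram for $\Lambda_j$ and applying the double coset decomposition from Lemma \ref{doublecoset}. Recall that the Fourier coefficient is recovered by
\[
a_{ij,\omega'}(r,s) = \frac{1}{|\Lambda_j|}\int_{\C/\Lambda_j} E_i(A_j^{-1}u,s)\, e(-\langle \omega', z\rangle)\, dx\, dy,
\]
where $u = z + rj$ with $z = x + \sqrt{-1}y$. First I would substitute the series definition $E_i(A_j^{-1}u,s) = \sum_{M\in\Gamma'_{\zeta_i}\setminus\Gamma}(r_{A_iMA_j^{-1}u})^{1+s}$, and use that summing $M$ over $\Gamma'_{\zeta_i}\setminus\Gamma$ while integrating over $\C/\Lambda_j$ amounts, via the standard unfolding argument, to summing the representatives $\omega_{d/c} = A_iMA_j^{-1}$ over the double coset set $\Omega_A\backslash A_i\Gamma A_j^{-1}/\Omega_B$ while extending the integral over all of $\C$ for the off-diagonal contributions. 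Lemma \ref{doublecoset} gives the partition of $A_i\Gamma A_j^{-1}$ into the identity-type piece $\delta_{ij}\Omega_\infty$ (present only when $\zeta_i\equiv\zeta_j$) and the pieces $\Omega_{d/c}$ indexed by $c\neq 0$ and $d \bmod c\Lambda_j$.

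The two terms in the statement correspond exactly to these two types of contribution. For the constant term $a_{ij,0}$, the diagonal piece $\delta_{ij}\Omega_\infty$ contributes the $r^{1+s}$ term after accounting for the index $[\Gamma_{\zeta_i}:\Gamma'_{\zeta_i}]$ and the scaling factor $|d_0|^{-2-2s}$ coming from the matrix $A_jL_0A_i = \bigl(\begin{smallmatrix}\cdot & \cdot \\ 0 & d_0\end{smallmatrix}\bigr)$, which is where the identification $z_M = z + $ (translation) forces $r_M = |d_0|^{-2}r$. For the off-diagonal pieces, I would write $r_{\omega_{d/c}u}$ explicitly via \eqref{rM} as $r/(|cz+d|^2 + |c|^2r^2)$, so that the $\omega'=0$ Fourier coefficient requires evaluating
\[
\sum_{c\neq 0}\sum_{d\bmod c\Lambda_j}\frac{1}{|c|^{2+2s}}\int_{\C}\frac{r^{1+s}}{\bigl(|z + d/c|^2 + r^2\bigr)^{1+s}}\,dx\,dy.
\]
After the substitution $z \mapsto z - d/c$ the $d$-dependence vanishes from the integral, leaving the Dirichlet series $\phi_{ij,0}(s)$ times a single integral over $\C$. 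That integral is a standard Euclidean computation in polar coordinates producing the factor $\pi/(s|\Lambda_j|)$ and the power $r^{1-s}$. For the nonzero Fourier coefficients $a_{ij,\omega'}$, the same unfolding yields $\phi_{ij,\omega'}(s)$ times an integral of the form $\int_\C r^{1+s}(|z|^2+r^2)^{-1-s}e(-\langle\omega',z\rangle)\,dx\,dy$, and this is precisely the integral representation of the modified Bessel function $K_s$, producing the stated $2\pi^{1+s}|\Lambda_j|^{-1}\Gamma(1+s)^{-1}|\omega'|^s r K_s(2\pi|\omega'|r)$.

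I expect the main obstacle to be the careful bookkeeping in the unfolding step: one must verify that integrating the $\Gamma$-invariant function over $\C/\Lambda_j$ and summing $M$ over $\Gamma'_{\zeta_i}\setminus\Gamma$ correctly unfolds to an integral over all of $\C$ indexed by the double cosets, with the right multiplicities. In particular, the cosets $\Omega_{d/c}$ are parametrized by $d \bmod c\Lambda_j$, and one must check that the stabilizer $\Omega_B = B\Gamma_{\eta}B^{-1}$ acting by translations in $\Lambda_j$ is exactly what extends the fundamental-domain integral to the full plane, so that no cosets are over- or under-counted; the relation displayed in the proof of Lemma \ref{doublecoset} showing that $\Omega_{d/c}$ is independent of the upper row and that $d$ ranges modulo $c\Lambda_j$ is the key input here. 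The remaining work—the two Euclidean integrals producing the elementary power of $r$ and the Bessel function—is standard and can be cited or computed directly; the delicate point is ensuring the convergence (valid for $\mathrm{Re}(s)>1$ by absolute convergence of the Eisenstein series) justifies interchanging summation and integration throughout.
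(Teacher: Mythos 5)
Your proposal is correct and follows essentially the same route as the paper: the paper's proof simply invokes the double coset decomposition of Lemma \ref{doublecoset} to reorganize the sum over $\Gamma'_{\zeta_i}\backslash\Gamma$ and then cites \cite[Theorem 2.1]{EGM2} for the resulting archimedean integrals, which are exactly the two Euclidean integrals (the elementary one giving $\pi r^{1-s}/s$ and the Bessel integral giving $K_s$) that you compute directly. The only discrepancy worth noting is the normalizing factor: you use $1/|\Lambda_j|$, consistent with the statement of the lemma, whereas the paper's displayed integral formula writes $1/|\Lambda_i|$ while integrating over a fundamental parallelogram for $\Lambda_j$ --- your version is the internally consistent one.
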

\begin{proof}
This follows from \cite[Theorem 2.1]{EGM2} and the double-coset decomposition in Lemma \ref{doublecoset}. In particular, the coefficient $a_{ij,\omega'}(r,s)$ can  be computed as
\[
a_{ij,\omega'}(r,s) = \frac{1}{|\Lambda_i|}\sum_{M\in \Gamma'_{\zeta_i}\backslash\Gamma}\int_{Q_i}(r_{A_iMA_j^{-1}})^{1+s}e(-\langle\omega',z\rangle)dx\ dy,
\]
where $Q_i$ is a fundamental parallelogram for $\Lambda_j.$ The sum over $M$ can be rewritten as a double sum over $\omega\in\Lambda_j$ and system of representatives
$\begin{psmallmatrix}
     \cdot & \cdot\\
     c & d
     \end{psmallmatrix}$
of the double cosets in the quotient
\[
    A_i \Gamma_{\zeta_i}' A_i^{-1} \backslash A_i \Gamma A_j^{-1} / A_j \Gamma_{\zeta_j}' A_j^{-1}
\]
such that $c \neq 0$, which we can identify with the set of pairs $(c,d)$ in Lemma \ref{doublecoset}.
\end{proof}

\noindent By Proposition \ref{Eis}(i) and the explicit form of the coefficients, we see that along the line Re$(s)=1$, the nonzero coefficients $a_{ij,\omega'}(r,s)$ are holomorphic, while $a_{ij,0}(r,s)$ has meromorphic continuation according to the properties of $\phi_{ij,0}(s)$, the latter having only a simple pole at $s=1$.

Finally, we note the following formula which shall be useful to us. For $\omega'\neq0$, the Dirichlet series $\phi_{ij,\omega'}(s)$ is holomorphic in $s$, so we may evaluate the nonzero Fourier coefficients at $s=1$ as
\begin{align}
    a_{ij,\omega'} (r,1) &= \frac{2\pi^{2}|\omega'|r}{|\Lambda_i|} K_1(2\pi|\omega'|r) \phi_{ij,\omega'}(1),
\end{align}
and using the properties of the Bessel function \cite[p.66]{magnus}, namely
\[
    K_1(2\pi|\omega'|r)  = (2\pi|\omega'|r) K_0(2\pi|\omega'|r),
\]
we have
\be
\label{k0}
    a_{ij,\omega'} (r,1) = \frac{4\pi^{3}|\omega'|^2 r^2}{|\Lambda_i|} K_0(2\pi|\omega'|r) \phi_{ij,\omega'}(1).
\ee
Moreover, we note from \cite[Lemma 1-2]{Asai} that as a solution to the differential equation, the modified Bessel function $K_{2s-1}$ is an eigenfunction of $\Delta$ with eignvalue $4s(s-1)$. In particular, when $s=0$, it is annihilated by $\Delta$.

\section{The Kronecker Limit Formula}


We shall use the Fourier expansion of the Eisenstein series to derive the limit formula. It is known that $\phi_{ii,\omega'}(s)$ has a simple pole at $s=1$ when $\omega'=0$ and is continuous at $s=1$ when $\omega'\neq0$. Thus, we have that
\begin{equation}
\label{lim}
    \lim_{s \to 1} \left(E_i(A_i^{-1}u,s) - a_{ii,0} (r,s) \right) = \sum_{\substack{\omega' \in \Lambda' \\ \omega' \neq 0}} a_{ii,\omega'} (r,1) q^{\omega'}.
\end{equation}
Now we want to expand the terms of the summation above. We know that the term $a_{ii,0}(r,s)$ contains the simple pole at $s=1$ of the Eisenstein series $E_i(A_i^{-1}u,s)$. Therefore we can write the Laurent expansion around $s=1$ as 
\[
a_{ii,0}(r,s) = \frac{\alpha_{ii}}{s-1} + \beta_{ii} + O(s-1),
\]
and also
\[
\phi_{ii,0}(s) = \frac{a_{ii}}{s-1} + b_{ii} + O(s-1),
\]
where $\alpha_{ii},\beta_{ii},a_{ii}$ and $b_{ii}$ are constants independent of $s$. In the case of $\Gamma = \text{PSL}(2,\mathcal O_K)$, that  $\phi_{ii,0}(s) = \zeta_K(s)/\zeta_K(s+1)$ (c.f. \cite{Asai,Sor}). We would like to derive a more explicit formula for $\alpha_{ii}$ and $\beta_{ii}$ in terms of $a_{ii}$ and $b_{ii}$. 

\begin{lem}
\label{a0exp}
In the expansion of $a_{ii,0}(r,s)$ around $s=1$ , we have
\begin{align}
\label{alpha}
\alpha_{ii} &= {a_{ii} \pi}{|\Lambda_i|^{-1}} = |\Lambda_i'|\mathrm{vol}(\Gamma)^{-1},\\
\label{beta}
\beta_{ii}&=\delta_{ii} [\Gamma_{\zeta_i}:\Gamma_{\zeta_i}'] |d_0|^{-4} r^{2} + {\pi}{|\Lambda_i|^{-1}}(b_{ii} - (a_{ii}+1)\log(r)).
\end{align}
\end{lem}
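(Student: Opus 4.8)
The plan is to read off both Laurent coefficients from a direct expansion of the two summands of $a_{ii,0}(r,s)$ in \eqref{a0} about $s=1$, feeding in the prescribed Laurent data $\phi_{ii,0}(s) = a_{ii}(s-1)^{-1} + b_{ii} + O(s-1)$. Write $a_{ii,0}(r,s) = T_1(s) + T_2(s)$, where $T_1(s) = \delta_{ii}[\Gamma_{\zeta_i}:\Gamma_{\zeta_i}']|d_0|^{-2-2s}r^{1+s}$ is entire in $s$ and $T_2(s) = \pi\phi_{ii,0}(s)r^{1-s}/(s|\Lambda_i|)$ carries the pole. Since $T_1$ is holomorphic at $s=1$, it contributes nothing to the principal part and only its value $T_1(1) = \delta_{ii}[\Gamma_{\zeta_i}:\Gamma_{\zeta_i}']|d_0|^{-4}r^2$ to the constant term, which is exactly the first summand of \eqref{beta}.

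For $T_2$ I would split off the analytic prefactor $g(s) := \pi r^{1-s}/(s|\Lambda_i|)$ and Taylor-expand it to first order about $s=1$. From $\frac{d}{ds}\log(r^{1-s}/s) = -\log r - s^{-1}$ one gets $g(1) = \pi/|\Lambda_i|$ and $g'(1) = -(\pi/|\Lambda_i|)(1+\log r)$, so that $g(s) = \frac{\pi}{|\Lambda_i|}\bigl(1 - (1+\log r)(s-1) + O((s-1)^2)\bigr)$. Multiplying this Taylor series against the Laurent series of $\phi_{ii,0}$ and collecting the $(s-1)^{-1}$ and $(s-1)^0$ coefficients gives the residue $\alpha_{ii} = \pi a_{ii}/|\Lambda_i|$ together with the $\phi$-dependent part of the constant term in \eqref{beta}; this step is purely the bookkeeping of a product of a Taylor and a Laurent series.

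It then remains to match $\alpha_{ii}$ with the geometric quantity $|\Lambda_i'|\mathrm{vol}(\Gamma)^{-1}$ in \eqref{alpha}. For this I would invoke Proposition \ref{Eis}(iii) together with the observation recorded after the Fourier expansion that the nonzero-frequency coefficients $a_{ii,\omega'}(r,s)$ are holomorphic along $\mathrm{Re}(s)=1$. The residue of the $\Gamma$-invariant function $E_i(A_i^{-1}u,s)$ at $s=1$ is constant in $u$, so when matched against the Fourier expansion it must be supplied entirely by the zeroth coefficient $a_{ii,0}$; comparing residues gives $\alpha_{ii} = |\Lambda_i'|\mathrm{vol}(\Gamma)^{-1}$, and equating the two expressions for $\alpha_{ii}$ yields the stated identity $\pi a_{ii}/|\Lambda_i| = |\Lambda_i'|\mathrm{vol}(\Gamma)^{-1}$.

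I do not anticipate a genuine obstacle: the argument is one elementary residue identification plus a product-of-series expansion. The only delicate point is the bookkeeping inside $T_2$, where both the $s^{-1}$ factor and the $r^{1-s}$ factor feed $g'(1)$; an error there would misstate the coefficient of $\log r$ in \eqref{beta}, so I would take care to track the cross term $a_{ii}\,g'(1)$ correctly when forming the constant coefficient.
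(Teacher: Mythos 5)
Your strategy is the same as the paper's: identify $\alpha_{ii}$ with the residue of $E_i(A_i^{-1}u,s)$ via Proposition \ref{Eis}(iii) (the nonzero modes being holomorphic on $\mathrm{Re}(s)=1$, the pole must live entirely in $a_{ii,0}$), and obtain the constant term by a direct Laurent expansion of the two summands of \eqref{a0} at $s=1$. Your treatment of $T_1$ and your Taylor data $g(1)=\pi/|\Lambda_i|$, $g'(1)=-(\pi/|\Lambda_i|)(1+\log r)$ are correct. The problem is exactly the cross term you defer to ``bookkeeping'': carrying it out gives the $(s-1)^0$ coefficient
\[
g(1)\,b_{ii}+g'(1)\,a_{ii}=\frac{\pi}{|\Lambda_i|}\bigl(b_{ii}-a_{ii}(1+\log r)\bigr)=\frac{\pi}{|\Lambda_i|}\bigl(b_{ii}-a_{ii}-a_{ii}\log r\bigr),
\]
which is \emph{not} the quantity $\frac{\pi}{|\Lambda_i|}\bigl(b_{ii}-(a_{ii}+1)\log r\bigr)$ appearing in \eqref{beta}; the two differ by $\frac{\pi}{|\Lambda_i|}(\log r-a_{ii})$ and agree only when $a_{ii}=\log r$. (Quick check: for $\phi(s)=(s-1)^{-1}$ and $r=e^2$ one has $s^{-1}\phi(s)r^{1-s}=(s-1)^{-1}-3+O(s-1)$, whereas $b-(a+1)\log r=-4$.) So the step ``collecting coefficients gives the $\phi$-dependent part of the constant term in \eqref{beta}'' fails: your correctly set-up expansion does not produce the stated formula.

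To be fair, this is not a defect of your method but a discrepancy with the statement itself: the paper's own proof performs the identical expansion and then asserts $s^{-1}\phi_{ii,0}(s)r^{1-s}=\frac{a_{ii}}{s-1}+(b_{ii}-(a_{ii}+1)\log r)+\cdots$, which does not follow from the factor expansions it displays. If you carry your computation through honestly you should report $\beta_{ii}=\delta_{ii}[\Gamma_{\zeta_i}:\Gamma_{\zeta_i}']|d_0|^{-4}r^2+\frac{\pi}{|\Lambda_i|}\bigl(b_{ii}-a_{ii}(1+\log r)\bigr)$ and flag the mismatch rather than claim agreement with \eqref{beta}. Note also that the discrepancy propagates: with the corrected constant term, the coefficient of $-\log r$ in Theorem \ref{klf} becomes $\pi a_{ii}/|\Lambda_i|=|\Lambda_i'|\mathrm{vol}(\Gamma)^{-1}$ rather than $C_{\Gamma,i}$, together with an extra additive constant $-\pi a_{ii}/|\Lambda_i|$, so the normalization of $\eta_{\Gamma,i}$ there would need adjusting as well.
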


\begin{proof}
That $\alpha_{ii} = |\Lambda_i'|\text{vol}(\Gamma)^{-1}$ simply follows from Proposition \ref{Eis}(iii). On the other hand, expanding the remaining terms of $a_{ii,0}(r,s)$ as in \eqref{a0} in a neighborhood of $s=1$, we write
\[
\frac{1}{s} = 1 - (s-1) +(s-1)^2 + O((s-1)^3),
\]
\[
r^{1+s} = r^2+r^2(s-1)\log (r)+O((s-1)^2),
\]
\[
r^{1-s}=1-(s-1)\log(r)+\frac{1}{2}(s-1)^2\log^2(r)+O((s-1)^3),
\]
and
\[
|d_0|^{-2-2s}=\frac{1}{|d_0|^{4}}-\frac{2\log(|d_0|)}{|d_0|^{4}}(s-1)+\frac{2\log^2(|d_0|)}{|d_0|^{4}}(s-1)^2+O((s-1)^3).
\]
In particular, we see that $\phi_{ii,0}(s)$ has a simple pole at $s=1$ while the remaining terms are holomorphic at $s=1$. Taking the Cauchy product of power series, we obtain for the first term $\delta_{ii} [\Gamma_{\zeta_i}:\Gamma_{\zeta_i}'] |d_0|^{-2-2s} r^{1+s}$ at $s=1$ the expansion with leading term
\[
\delta_{ii} [\Gamma_{\zeta_i}:\Gamma_{\zeta_i}'] |d_0|^{-4} r^{2} + \dots,
\]
while for the second term we have the product of $\pi/|\Lambda_i|$ with
\begin{align*}
    s^{-1} \phi_{ii,0}(s) r^{1-s}= \frac{a_{ii}}{s-1} + (b_{ii} - (a_{ii}+1)\log(r)) - b_{ii}(s-1)\log(r) +\dots
    \end{align*}
Comparing terms then gives the required formulas.
\end{proof}

We would like to separate the summation over the dual lattice $\Lambda'_j$ in \eqref{lim} into summations over a smaller region using the pair of involutions 
\[
\omega \mapsto -\omega \text{ and }\omega \mapsto \overline\omega
\]
acting on $\Lambda'_j$. Define the set
\[
\Lambda'_{j,+} = \{\omega'\in \Lambda'_j: \text{Re}(\omega')>  0\}\cup \{\omega'\in \Lambda'_j: \text{Re}(\omega') =  0, \text{Im}>0\}
\]
whereby the union of $\Lambda'_{j,+}$ and $\overline{\Lambda'_{j,+}}$ equals $\Lambda'_j\cap \mathbb C^\times$.

\begin{lem}
\label{fc}
The Fourier coefficients satisfy the following identities:
\[
a_{ij,{\omega}'}(r,1) = a_{ij,\widebar{\omega}'}(r,1),\quad and \quad a_{ij,-\omega'}(r,1) = \widebar{a_{ij,\omega'}(r,1)}.
\]
\end{lem}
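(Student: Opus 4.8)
The plan is to reduce both identities to corresponding symmetries of the Dirichlet series $\phi_{ij,\omega'}$ at $s=1$, using the explicit evaluation \eqref{k0},
\[
a_{ij,\omega'}(r,1) = \frac{4\pi^{3}|\omega'|^2 r^2}{|\Lambda_i|}K_0(2\pi|\omega'|r)\,\phi_{ij,\omega'}(1).
\]
The entire prefactor $\tfrac{4\pi^{3}|\omega'|^2 r^2}{|\Lambda_i|}K_0(2\pi|\omega'|r)$ is real and depends on $\omega'$ only through $|\omega'|$. Since $|{-\omega'}| = |\widebar{\omega}'| = |\omega'|$, this prefactor is unchanged when $\omega'$ is replaced by $-\omega'$ or by $\widebar{\omega}'$. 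Hence the two claimed identities are equivalent to $\phi_{ij,\widebar{\omega}'}(1) = \phi_{ij,\omega'}(1)$ and $\phi_{ij,-\omega'}(1) = \widebar{\phi_{ij,\omega'}(1)}$ respectively, and it suffices to establish these.

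The second identity is immediate. Since the scalar product is $\R$-bilinear, $\langle -\omega', d/c\rangle = -\langle\omega', d/c\rangle$, so $e(\langle-\omega', d/c\rangle) = \widebar{e(\langle\omega', d/c\rangle)}$; as every denominator $|c|^{2+2s}$ is real and positive at $s=1$, conjugating the series term by term gives $\phi_{ij,-\omega'}(1) = \widebar{\phi_{ij,\omega'}(1)}$ directly. Equivalently, one may observe that $E_i(A_j^{-1}u,s)$ is real for real $s$, being a sum of positive powers of the real quantities $r_{A_iMA_j^{-1}u}$, and that $\widebar{q^{\omega'}} = q^{-\omega'}$; comparing the Fourier expansions of $E_i$ and its conjugate and invoking uniqueness of Fourier coefficients then yields $a_{ij,-\omega'} = \widebar{a_{ij,\omega'}}$ wherever the coefficients are regular, in particular at $s=1$.

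For the first identity I would begin from the computation $\langle\widebar{\omega}', d/c\rangle = \mathrm{Re}(\widebar{\omega}'\,\widebar{d/c}) = \mathrm{Re}(\widebar{\omega'(d/c)}) = \langle\omega', \widebar{d/c}\rangle$, which gives
\[
\phi_{ij,\widebar{\omega}'}(1) = \sum_{c\neq0}\frac{1}{|c|^{4}}\sum_{d\bmod c\Lambda_j} e\!\left(\langle\omega', \widebar{d/c}\rangle\right).
\]
Writing $\widebar{d/c} = \widebar{d}/\widebar{c}$, I would then re-index the double sum by the substitution $(c,d)\mapsto(\widebar{c},\widebar{d})$. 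This is the crux of the argument and the main obstacle: one must verify that complex conjugation carries the chosen system of double-coset representatives of $A_i\Gamma_{\zeta_i}'A_i^{-1}\backslash A_i\Gamma A_j^{-1}/A_j\Gamma_{\zeta_j}'A_j^{-1}$ to another such system, with $c$ replaced by $\widebar{c}$ and $d \bmod c\Lambda_j$ replaced by $\widebar{d}\bmod \widebar{c}\Lambda_j$. Granting this, $|\widebar{c}| = |c|$ leaves every denominator invariant and the re-indexed sum is exactly $\phi_{ij,\omega'}(1)$.

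The verification needed for the re-indexing is the conjugation-invariance of the relevant data: that the reflection $u = z+rj \mapsto \widebar{z}+rj$, an orientation-reversing isometry of $\HH$, normalizes $\Gamma$ and permutes the cusps compatibly with the scaling matrices $A_i$. This holds in the imaginary quadratic setting, for instance for $\Gamma = \mathrm{PSL}(2,\OO_K)$, where conjugation is the nontrivial Galois automorphism and preserves $\OO_K$ and each lattice $\Lambda_j$. Because $q^{\omega'}(\widebar{z}) = q^{\widebar{\omega}'}(z)$, this symmetry also gives a conceptual proof parallel to the reality argument above: the invariance $E_i(A_j^{-1}(\widebar{z}+rj),s) = E_i(A_j^{-1}(z+rj),s)$ transfers to the Fourier side as $a_{ij,\omega'} = a_{ij,\widebar{\omega}'}$. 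I expect isolating this reflection symmetry, and checking its compatibility with the cusp normalizations, to be the only substantive work in the lemma.
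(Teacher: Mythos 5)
Your argument follows the paper's proof essentially step for step: both reduce the two identities to the symmetries $\phi_{ij,\widebar{\omega}'}(1)=\phi_{ij,\omega'}(1)$ and $\phi_{ij,-\omega'}(1)=\widebar{\phi_{ij,\omega'}(1)}$ via the real prefactor in \eqref{k0}, handle the second by termwise conjugation of the exponentials, and handle the first by the substitution $(c,d)\mapsto(\bar{c},\bar{d})$ after the identity $\langle\widebar{\omega}',\widebar{d/c}\rangle=\langle\omega',d/c\rangle$. The step you single out as the crux---that conjugation permutes the double-coset data---is precisely what the paper dispatches with the bare assertion that if $(c,d)$ occurs in the sum then so does $(\bar{c},\bar{d})$, so your caution is well placed, but you are not missing any argument that the paper actually supplies.
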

\begin{proof}
We consider first the action of complex conjugation $\omega \mapsto \widebar\omega$ on the nonzero Fourier coefficient 
\[
a_{ij,\omega'}(r,1)  =  \frac{4\pi^{3}|\omega'|^2 r^2}{|\Lambda_j|} K_0(2\pi|\omega'|r) \phi_{ij,\omega'}(1).
\]
It suffices to show that $\phi_{ij,\omega'}(1) = \phi_{ij,\widebar{\omega}'}(1)$, hence $a_{ij,\omega'}(r,s)=a_{ij,\overline{\omega}'}(r,s)$. Consider
\[
\phi_{ij,\widebar{\omega}'}(1) = \lim_{s \to 1} \sum_{(c,d)} \frac{e(\langle \widebar{\omega}', \frac{d}{c}\rangle)}{|c|^{2+2s}}, 
\]
where the summation runs over distinct pairs $(c,d)$ with $c\neq 0$ and $d\bmod c\Lambda_j$. Using the property that 
\[
\langle {\widebar{\omega}}', \widebar{d/c} \rangle=\overline{\langle\omega', d/c\rangle} = \langle\omega', {d}/{c}\rangle,
\]
and the fact that if $(c,d)$ occurs in the summation then so does $(\bar{c},\bar{d})$, it follows that
\[
\sum_{(c,d)} \frac{e(\langle \widebar{\omega}', d/c\rangle)}{|c|^{2+2s}} = \sum_{(\bar{c},\bar{d})} \frac{e(\langle \widebar{\omega}', \widebar{d/c}\rangle)}{|c|^{2+2s}} = \sum_{(c,d)} \frac{e(\langle {\omega}', d/c\rangle)}{|c|^{2+2s}} ,
\]
and therefore $\phi_{ij,\omega'}(1) = \phi_{ij,\widebar{\omega}'}(1)$.

Next consider the map $\omega\mapsto -\omega$. We observe that $e(\langle -{\omega}', d/c\rangle) = \widebar{e(\langle {\omega}', d/c\rangle)}$, so that 
\[
\phi_{ij,-\omega'}(1) = \overline{\phi_{ij,\omega'}(1)}.
\]
 Also, using the integral representation of the modified Bessel function 
 \[
 K_0(x) = \int_0^\infty\frac{\cos(xt)}{\sqrt{t^2+1}}dt,
 \]
 it follows that $K_0(2\pi|\omega'|r) =\widebar{K_0(2\pi|\omega'|r)}$, and thus together with \eqref{k0} we have
\[
a_{ij,-\omega'}(r,1) = \widebar{a_{ij,\omega'}(r,1)},
\]
giving the second identity.
\end{proof}

Now we are ready to prove the analogue of Kronecker's first limit formula for the Eisenstein series $E_{i}(A_i^{-1}u,s)$. For ease of notation, let us define the constant
\[
C_{\Gamma,i} = \frac{|\Lambda_i'|}{\mathrm{vol}(\Gamma)}+\frac{\pi}{|\Lambda_i|}.
\]
Then combining the previous results and specializing to $i=j$, we obtain the following formula.
\begin{thm}
\label{klf}
With notation as above, we have
\begin{equation}
\label{klf1}
 \lim_{s \to 1}\left(E_i(A_i^{-1}u,s) - \frac{|\Lambda_i'|\rm{vol}(\Gamma)^{-1}}{s-1} \right)  = \frac{\pi}{|\Lambda_i|}b_{ii} - C_{\Gamma,i}\log |r\cdot\eta_{\Gamma,i}(u)^2|,
 \end{equation}
where the function $\eta_{\Gamma,i}(u)$ is defined by
\be
\label{eta}
C_{\Gamma,i}\log\eta_{\Gamma,i}(u) = \frac12[\Gamma_{\zeta_i}:\Gamma_{\zeta_i}'] |d_0|^{-4} r^{2} +  \sum_{\omega'\in\Lambda'_{i,+}} a_{ii,\omega'}(r,1)q^{\omega'}.
\ee
\end{thm}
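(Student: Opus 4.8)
The plan is to treat the theorem as the assembly of the Fourier expansion \eqref{lim} with the two preceding lemmas, the analytic substance having already been carried out there. First I would write, as an identity of meromorphic functions near $s=1$,
\[
E_i(A_i^{-1}u,s) - \frac{|\Lambda_i'|\,\mathrm{vol}(\Gamma)^{-1}}{s-1} = \bigl(E_i(A_i^{-1}u,s) - a_{ii,0}(r,s)\bigr) + \Bigl(a_{ii,0}(r,s) - \frac{\alpha_{ii}}{s-1}\Bigr),
\]
where $\alpha_{ii} = |\Lambda_i'|\,\mathrm{vol}(\Gamma)^{-1}$ by Proposition \ref{Eis}(iii) and \eqref{alpha}. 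The first bracket is the sum of the nonzero Fourier modes, whose limit as $s\to1$ is $\sum_{\omega'\neq0}a_{ii,\omega'}(r,1)q^{\omega'}$ by \eqref{lim}; the second bracket tends to $\beta_{ii}$ by the Laurent expansion of $a_{ii,0}$. Thus the left-hand side of the theorem equals $\beta_{ii} + \sum_{\omega'\neq0}a_{ii,\omega'}(r,1)q^{\omega'}$.

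Next I would insert the value of $\beta_{ii}$ from \eqref{beta}. The one nontrivial simplification is the identity $\tfrac{\pi}{|\Lambda_i|}(a_{ii}+1) = C_{\Gamma,i}$, which follows from $\tfrac{\pi}{|\Lambda_i|}a_{ii} = \alpha_{ii} = |\Lambda_i'|\,\mathrm{vol}(\Gamma)^{-1}$ together with the definition of $C_{\Gamma,i}$. This collapses the two logarithmic contributions into a single $-C_{\Gamma,i}\log r$, exposes $\tfrac{\pi}{|\Lambda_i|}b_{ii}$ as the additive constant of the theorem, and leaves the real growth term $[\Gamma_{\zeta_i}:\Gamma_{\zeta_i}']|d_0|^{-4}r^2$ to be absorbed into $\eta_{\Gamma,i}$.

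The heart of the argument is the folding of the remaining exponential sum onto the half-lattice. Pairing $\omega'$ with $-\omega'$ and using that $\langle\omega',z\rangle$ is real, so that $q^{-\omega'} = \overline{q^{\omega'}}$, together with $a_{ii,-\omega'}(r,1) = \overline{a_{ii,\omega'}(r,1)}$ from Lemma \ref{fc}, each pair contributes $2\,\mathrm{Re}\!\bigl(a_{ii,\omega'}(r,1)q^{\omega'}\bigr)$. Since $\Lambda'_{i,+}$ together with its negative exhausts $\Lambda_i'\cap\C^\times$, this gives
\[
\sum_{\omega'\neq0}a_{ii,\omega'}(r,1)q^{\omega'} = 2\,\mathrm{Re}\sum_{\omega'\in\Lambda'_{i,+}}a_{ii,\omega'}(r,1)q^{\omega'}.
\]
By the definition \eqref{eta} the right-hand side is $2\,\mathrm{Re}\bigl(C_{\Gamma,i}\log\eta_{\Gamma,i}(u)\bigr)$ less the real term $[\Gamma_{\zeta_i}:\Gamma_{\zeta_i}']|d_0|^{-4}r^2$, so collecting everything and using $\log|r\cdot\eta_{\Gamma,i}(u)^2| = \log r + \log|\eta_{\Gamma,i}(u)|^2$ reassembles the logarithmic term on the right-hand side of \eqref{klf1}.

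I expect the main obstacle to be analytic rather than formal: justifying the termwise passage to the limit in \eqref{lim}, that is, interchanging $\lim_{s\to1}$ with the sum over $\omega'$. This rests on the holomorphy at $s=1$ of the nonzero coefficients $a_{ii,\omega'}(r,s)$ (equivalently of $\phi_{ii,\omega'}(s)$ for $\omega'\neq0$), recorded via Proposition \ref{Eis}(i) and the explicit form of the coefficients, together with the exponential decay of $K_0(2\pi|\omega'|r)$ in $|\omega'|$ furnished by \eqref{k0}, which secure locally uniform convergence near $s=1$. The only other point demanding care is the sign and normalization bookkeeping in matching the folded real part against the definition \eqref{eta} of $\eta_{\Gamma,i}$; once the conventions there are pinned down, the remaining identities are immediate.
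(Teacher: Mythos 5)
Your proposal follows essentially the same route as the paper's proof: split off the constant term, feed in the Laurent data of Lemma \ref{a0exp} and the identity $C_{\Gamma,i}=\pi(a_{ii}+1)/|\Lambda_i|$, fold the nonzero modes onto $\Lambda'_{i,+}$ via Lemma \ref{fc}, and justify the termwise limit by the exponential decay of the Bessel factors. The only (cosmetic) difference is that you fold directly by pairing $\omega'$ with $-\omega'$ to get $2\,\mathrm{Re}$ of the half-lattice sum, which is slightly cleaner than the paper's bookkeeping with both involutions, and you correctly flag the one delicate point (matching signs and normalizations against the definition \eqref{eta}) that also requires care in the paper's own argument.
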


\begin{proof}
The convergence of the righthand side of \eqref{klf1} follows from bounding the nonconstant Fourier coefficients of the Eisenstein series $E_i(A_i^{-1}u,s)$. In particular, by \cite[(6.3.21)]{EGM1}, or more straightforwardly by observing that the nonconstant Fourier coefficients are holomorphic on Re$(s)=1$  and have exponential decay by virtue of the Bessel functions, we have that 
\[
E_i(A_i^{-1}u,s)- a_{ii,0}(r,s) = O(e^{-Kr})
\]
as $r\to\infty$ for some fixed $K>0$ whenever $s$ is not a pole of $E_i(\cdot,s)$. 

We now examine the limit. Using the expansion of $a_{ij,0}(r,s)$, we can rearrange \eqref{lim} as
\[
\lim_{s \to 1} \left(E_i(A_j^{-1}u,s) - \frac{\alpha_{ij}}{s-1} \right) = \beta_{ij}  + \sum_{\substack{\omega'\in\Lambda'_i\\\omega'\neq0}}a_{ij,\omega'}(r,1)q^{\omega'}.
\]
Using Lemma \ref{a0exp} and setting $i=j$, the righthand side is equal to
\be
\label{sum}
\frac{\pi}{|\Lambda_i|}(b_{ii} - \left(a_{ii}+1\right)\log(r)) +  [\Gamma_{\zeta_i}:\Gamma_{\zeta_i}'] |d_0|^{-4} r^{2} + \sum_{\substack{\omega'\in\Lambda'_i\\\omega'\neq0}}a_{ii,\omega'}(r,1)q^{\omega'}.
\ee
Since $C_{\Gamma,i} = \pi(a_{ii}+1)/|\Lambda_i|,$ the first term can be written as 
\[
{\pi}{|\Lambda_i|^{-1}}b_{ii} - C_{\Gamma,i}\log(r).
\]
On the other hand, the map $\omega'\mapsto-{\omega}'$ is symmetric about the origin, so by Lemma \ref{fc}(ii) we can separate the series in \eqref{sum} into summations over $\Lambda'_{i,+}$. Writing for $a(\omega') = a_{ii,\omega'}(r,1)$, we have
\begin{align*}
\sum_{\text{Re}(\omega')\ge 0}a(\omega')q^{\omega'}   + \sum_{\text{Re}(\omega')< 0}a(\omega')q^{\omega'} 
&= \sum_{\text{Re}(\omega')\ge 0}a(\omega')q^{\omega'}   + \sum_{\text{Re}(\omega')> 0}a(- \omega')q^{- \omega'}\\
&= \sum_{\text{Re}(\omega')\ge 0}a(\omega')q^{\omega'}   + \sum_{\text{Re}(\omega')> 0}\overline{a(\omega')q^{ {\omega'}}},
\end{align*}
where in the last line we have used the fact that the second  sum is invariant under  $\omega'\mapsto \overline{\omega'}$. Also, the contribution of Re$(\omega')=0$ in the first sum can be written as 
\[
\sum_{\text{Re}(\omega') = 0}a(\omega')q^{\omega'} = \sum_{\substack{\text{Re}(\omega') = 0\\ \text{Im}(\omega') >0 }}a(\omega')q^{\omega'}  + \sum_{\substack{\text{Re}(\omega') = 0\\ \text{Im}(\omega') >0 }}\overline{a(\omega')q^{\omega'}} 
\]
since we are assuming $\omega'$ is nonzero. Altogether, we have therefore
\[
\sum_{\substack{\omega'\in\Lambda'_i\\\omega'\neq0}}a_{ii,\omega'}(r,1)q^{\omega'} = \sum_{\omega\in \Lambda_{j,+}'} a(\omega')q^{\omega'}   + \sum_{\omega\in \Lambda_{j,+}'}\overline{a(\omega')q^{ {\omega'}}}.
\]
It then follows from the definition of $\log \eta_{\Gamma,i}$ in \eqref{eta} that
\begin{align*}
&\frac{\pi}{|\Lambda_i|}b_{ii} - C_{\Gamma,i}\left(\log(r) + \log\eta_{\Gamma,i}(u) + \overline{\log\eta_{\Gamma,i}(u)}\right),
\end{align*}
and the required formula follows.
\end{proof}

\section{The function $D_{\Gamma,i}$}

We now examine the properties of our function $\log\eta_{\Gamma,i}$, and use it to construct a $D_{\Gamma,i}$ analogous to the classical case. Recall that a function $f:\HH\to \C$ is called harmonic if it is twice-differentiable and $\Delta f = 0$.  

\begin{lem}
\label{harm}
$\log\eta_{\Gamma,i}$ is harmonic.
\end{lem}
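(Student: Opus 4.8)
The plan is to exploit the linearity of $\Delta$ together with the fact that $s=1$ is exactly the value at which the eigenvalue governing the Eisenstein series degenerates to zero. Since $C_{\Gamma,i}$ is a nonzero constant, it suffices by \eqref{eta} to show that the term $\tfrac12[\Gamma_{\zeta_i}:\Gamma_{\zeta_i}']|d_0|^{-4}r^2$ and each nonzero Fourier term $a_{ii,\omega'}(r,1)q^{\omega'}$ is annihilated by $\Delta$, and that $\Delta$ may be applied under the summation sign. For the first term, $r^2 = r^{1+s}\big|_{s=1}$ and $-\Delta r^{1+s} = (1-s^2)r^{1+s}$ by \eqref{dr}, whose eigenvalue vanishes at $s=1$; alternatively one computes directly from $\Delta = r^2(\partial_x^2+\partial_y^2+\partial_r^2)-r\partial_r$ that $\Delta r^2 = 2r^2 - 2r^2 = 0$.

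For the nonzero modes I would work with the $s$-dependent radial factor $rK_s(2\pi|\omega'|r)$, whose value at $s=1$ reproduces $a_{ii,\omega'}(r,1)$ up to the constant $2\pi^2|\omega'|\phi_{ii,\omega'}(1)/|\Lambda_i|$. Writing $q^{\omega'} = e(\langle\omega',z\rangle)$ and $a = 2\pi|\omega'|$, the relation $(\partial_x^2+\partial_y^2)q^{\omega'} = -4\pi^2|\omega'|^2 q^{\omega'}$ collapses the action of $\Delta$ onto the radial variable, yielding
\[
\Delta\!\left(rK_s(ar)q^{\omega'}\right) + (1-s^2)\,rK_s(ar)\,q^{\omega'}
= r\left[x^2K_s''(x) + xK_s'(x) - (x^2+s^2)K_s(x)\right]q^{\omega'}\Big|_{x=ar}.
\]
The bracketed expression vanishes identically by the modified Bessel equation satisfied by $K_s$, so $rK_s(ar)q^{\omega'}$ is a $\Delta$-eigenfunction with eigenvalue $-(1-s^2)$; this is precisely the eigenfunction property recorded from \cite{Asai}, specialized to the index occurring at $s=1$. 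Setting $s=1$ makes the eigenvalue zero, so each $a_{ii,\omega'}(r,1)q^{\omega'}$ is harmonic. Equivalently, $\Delta$ commutes with the lattice translations defining the Fourier expansion and hence acts diagonally on modes, so each mode inherits the eigenvalue $1-s^2$ of $E_i(A_i^{-1}u,s)$, which is holomorphic at $s=1$.

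It remains to justify termwise differentiation. As in the proof of Theorem \ref{klf}, the Bessel factors decay exponentially in $r$, and differentiating in $x$, $y$, or $r$ preserves this exponential decay while the coefficients $\phi_{ii,\omega'}(1)$ grow only polynomially; this gives local uniform convergence of the formally differentiated series and lets $\Delta$ pass through the sum over $\Lambda'_{i,+}$. Assembling the three contributions then yields $\Delta\log\eta_{\Gamma,i} = 0$. I expect the only real subtlety to lie in the second step: harmonicity comes from the \emph{index} of the Bessel function at $s=1$ via the modified Bessel equation, so one should keep the radial factor in the form $rK_s(2\pi|\omega'|r)$ rather than any rewriting, and the convergence bookkeeping in the last step is then routine.
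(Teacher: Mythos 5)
Your proposal is correct and follows essentially the same route as the paper: the constant term is killed by $\Delta$ at $s=1$ via \eqref{dr}, and each nonzero mode $rK_s(2\pi|\omega'|r)q^{\omega'}$ is shown, using the modified Bessel equation after collapsing $\partial_x^2+\partial_y^2$ onto the exponential, to be a $\Delta$-eigenfunction whose eigenvalue $1-s^2$ vanishes at $s=1$. Your explicit justification of termwise differentiation via the exponential decay of the Bessel factors is a welcome addition that the paper leaves implicit.
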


\begin{proof}
Differentiating term by term, by \eqref{dr} the constant term is annihilated by $\Delta$. For the higher Fourier coefficients, this follows as in \cite[Lemma 1-2]{Asai} (see also \cite[(20)]{ito}). Namely, the modified Bessel functions $K_s(r)$ satisfy the differential equation
\[
r^2\frac{d^2}{dr^2}K_s(r) + r\frac{d}{dr}K_s(r)-(r^2+s^2)K_s(r)=0,
\]
and following \cite[p.77]{magnus} the function $w = r K_s(2\pi |\alpha| r)$ then satisfies the differential equation
\[
r^2\frac{d^2w}{dr^2} - r\frac{dw}{dr}+((2\pi |\alpha| r)^2+(1- s^2 ))w=0.
\]
On the other hand, applying $\Delta$ to the relevant part of the Fourier coefficient $h_\alpha(r) = r K_s(2\pi|\omega'|r) e(\langle \omega', z\rangle)$ yields
\[
\left(r^2\frac{d}{dr^2}  - r\frac{d}{dr} + (1-s^2) - 4\pi |\alpha|^2 r^2 \right)h_\alpha(r) = 0.
\]
which agrees with the preceding equation. Specializing at $s=1$ then yields the required property.
\end{proof}

Using the analytic properties of the function $\log\eta_{\Gamma,i}$, we can define the analogue of $S(a,b,c,d)$ in \eqref{trans}, now associated to the Kleinian group $\Gamma$ at the cusp $\zeta_i$.  The following formula can be viewed as a generalization of \eqref{trans} to our setting.

\begin{prop}
\label{transdef}
For any $M \in A_i\Gamma A_i^{-1}$, we have
\[
\log\eta_{\Gamma,i}(M u) = \log\eta_{\Gamma,i}(u) + \frac12\log (|cz+d|^2+|c|^2r^2) + \pi \sqrt{-1} D_{\Gamma,i}(M),
\]
where $D_{\Gamma,i}(M)$ is a real-valued function depending on $\Gamma, i,$ and $M$.
\end{prop}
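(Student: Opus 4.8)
The plan is to \emph{define} the candidate function by
\[
\pi\sqrt{-1}\,D_{\Gamma,i}(M) := \log\eta_{\Gamma,i}(Mu) - \log\eta_{\Gamma,i}(u) - \tfrac12\log\bigl(|cz+d|^2+|c|^2r^2\bigr),
\]
writing $G_M(u)$ for the right-hand side, and then to prove two things: that $G_M$ is purely imaginary, so that $D_{\Gamma,i}(M)$ is real-valued, and that $G_M$ is independent of $u$, so that it genuinely depends only on $\Gamma$, $i$, and $M$.

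First I would show $\mathrm{Re}\,G_M\equiv 0$. Let
\[
F_i(u) = \lim_{s\to1}\Bigl(E_i(A_i^{-1}u,s) - \tfrac{|\Lambda_i'|\mathrm{vol}(\Gamma)^{-1}}{s-1}\Bigr).
\]
Because $E_i$ is $\Gamma$-invariant, $u\mapsto E_i(A_i^{-1}u,s)$ is invariant under $A_i\Gamma A_i^{-1}$, and by Proposition \ref{Eis}(iii) the subtracted principal part is a constant independent of $u$; hence $F_i$ is $A_i\Gamma A_i^{-1}$-invariant. Theorem \ref{klf} identifies $F_i(u)$ with $\tfrac{\pi}{|\Lambda_i|}b_{ii} - C_{\Gamma,i}\log\bigl(r|\eta_{\Gamma,i}(u)|^2\bigr)$, so $\log\bigl(r|\eta_{\Gamma,i}(u)|^2\bigr)$ is itself $A_i\Gamma A_i^{-1}$-invariant. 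Comparing its values at $u$ and $Mu$ and using $r_M = r/(|cz+d|^2+|c|^2r^2)$ from \eqref{rM} gives
\[
\log|\eta_{\Gamma,i}(Mu)|^2 - \log|\eta_{\Gamma,i}(u)|^2 = \log\bigl(|cz+d|^2+|c|^2r^2\bigr),
\]
and since $2\,\mathrm{Re}\,G_M$ equals the left-hand side minus $\log(|cz+d|^2+|c|^2r^2)$, we obtain $\mathrm{Re}\,G_M\equiv 0$. Setting $D_{\Gamma,i}(M) = G_M(u)/(\pi\sqrt{-1})$ then makes it real-valued.

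Next I would verify that $G_M$ is harmonic. By Lemma \ref{harm}, $\log\eta_{\Gamma,i}$ is harmonic, and since $M$ acts on $\HH$ by an isometry and $\Delta$ commutes with isometries, $u\mapsto\log\eta_{\Gamma,i}(Mu)$ is harmonic as well. For the automorphy factor I would write $\tfrac12\log(|cz+d|^2+|c|^2r^2) = \tfrac12(\log r - \log r_M)$; a direct computation gives $\Delta\log r = -2$, whence $\Delta\log r_M = (\Delta\log r)(Mu) = -2$ by isometry-invariance, so the two contributions cancel and this term is harmonic. Thus $G_M$ is a harmonic function with identically vanishing real part.

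The main obstacle is the last step: concluding that $G_M$ is constant. Harmonicity together with $\mathrm{Re}\,G_M\equiv 0$ does not by itself suffice, since $\sqrt{-1}$ times any real harmonic function has vanishing real part. To bridge the gap I would exploit the one-sided Fourier structure of \eqref{eta}, in which only the frequencies $\omega'\in\Lambda'_{i,+}$ appear: in each mode the real and imaginary parts share the common radial profile $r^2K_0(2\pi|\omega'|r)$ and carry conjugate angular factors, so that $\mathrm{Re}\log\eta_{\Gamma,i}$ and $\mathrm{Im}\log\eta_{\Gamma,i}$ are linked by first-order relations playing the role of the Cauchy--Riemann equations. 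These relations should propagate $d(\mathrm{Re}\,G_M)=0$ to $d(\mathrm{Im}\,G_M)=0$, forcing $\mathrm{Im}\,G_M$ to be constant. Alternatively, one may argue by the maximum principle: the exponential decay of the nonconstant modes shows $\mathrm{Im}\log\eta_{\Gamma,i}(u)\to 0$ as $r\to\infty$, and controlling $G_M$ at the remaining boundary of $\HH$ would reduce constancy of the harmonic function $\mathrm{Im}\,G_M$ to a boundary-value argument. Making either route rigorous is the delicate point; once $G_M$ is shown to be constant, $D_{\Gamma,i}(M) = G_M/(\pi\sqrt{-1})$ is the required real-valued function of $M$, in the form used in Corollary \ref{hom}.
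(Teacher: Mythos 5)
Your reduction of the proposition to showing that $G_M(u)$ is a purely imaginary constant is exactly the structure of the paper's proof, and your argument that $\mathrm{Re}\,G_M\equiv 0$ --- via the $A_i\Gamma A_i^{-1}$-invariance of the renormalized limit in Theorem \ref{klf} together with the formula \eqref{rM} for $r_M$ --- is the same as the paper's. Your observation that $G_M$ is harmonic is also correct (the computation $\Delta\log r=-2$ checks out, and the paper uses the harmonicity of $\mathrm{Im}\log\eta_{\Gamma,i}$ in the same way). But the last step, independence of $u$, is a genuine gap: you correctly note that harmonicity plus $\mathrm{Re}\,G_M\equiv 0$ cannot suffice, and you then only sketch two possible remedies without carrying either out. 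The Cauchy--Riemann route is not available as stated: $\HH$ is three-dimensional, $\log\eta_{\Gamma,i}$ is not holomorphic in any variable, and one-sided Fourier support over $\Lambda'_{i,+}$ does not by itself propagate $d(\mathrm{Re}\,G_M)=0$ to $d(\mathrm{Im}\,G_M)=0$. The maximum-principle route stalls exactly where you say it does: as $r\to\infty$ the point $Mu$ tends to $a/c\in\partial\HH$ rather than to the cusp at $\infty$, so the exponential decay of the nonconstant Fourier modes controls $\mathrm{Im}\log\eta_{\Gamma,i}(u)$ but not $\mathrm{Im}\log\eta_{\Gamma,i}(Mu)$, and the behavior near the rest of the boundary is precisely what you cannot control without already using automorphy.

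The ingredient you are missing is spectral: the paper invokes the fact that any $\Gamma$-invariant harmonic function that is square-integrable over a fundamental domain of $\Gamma$ is constant (\cite[Corollary 3.3.4]{EGM1}). It applies this not to $G_M$ directly but to $\mathrm{Im}(\log\eta_{\Gamma,i}(u))$ itself, which is harmonic by Lemma \ref{harm}, and which the paper argues (from \eqref{klf1} and the Fourier expansion \eqref{eta}) is $\Gamma$-invariant with rapid decay at the cusps, hence square-integrable, hence constant; the difference $\mathrm{Im}\,G_M(u)=\mathrm{Im}\log\eta_{\Gamma,i}(Mu)-\mathrm{Im}\log\eta_{\Gamma,i}(u)$ is then independent of $u$. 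To repair your argument, this is the lemma to import: replace the pointwise boundary-value considerations by the global statement that the discrete spectrum of $-\Delta$ at eigenvalue $0$ on $L^2(\Gamma\backslash\HH)$ consists only of constants, and verify the invariance and integrability hypotheses for the harmonic function in question.
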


\begin{proof}
By Theorem \ref{klf}, the fact that $E_i(u,s)$ is automorphic in $u$, and $\alpha_{ii},\beta_{ii}$ are independent of $u$, we see that
\[
\log | r_{M} \eta_{\Gamma,i}(Mu)^2| = \log |r\eta_{\Gamma,i}(u)^2|.
\]
On the other hand, from \eqref{rM} we have
\[
\log | r_{M} \eta_{\Gamma,i}(Mu)^2| = \log r  - \log (|cz+d|^2+|c|^2r^2)+\log |\eta_{\Gamma,i}(Mu)|^2,
\]
and it follows that
\be
\label{logtrans}
\log |\eta_{\Gamma,i}(Mu)| = \log |\eta_{\Gamma,i}(u)| + \frac12\log (|cz+d|^2+|c|^2r^2).
\ee
Therefore the function defined by
\[
F(M,u) = \log \eta_{\Gamma,i}(Mu) - \log \eta_{\Gamma,i}(u) - \frac12\log (|cz+d|^2+|c|^2r^2)
\]
has real part identical to zero. We thus conclude that $F(M,u)$ is equal to a purely imaginary constant that depends on at most $\Gamma, i,$ $M$, and $u$. 

As for the dependence on $u$, consider the imaginary part
\[
\text{Im}(F(M,u)) = \text{Im}(\log \eta_{\Gamma,i}(Mu)) - \text{Im}(\log \eta_{\Gamma,i}(u)).
\]
It follows from Lemma \ref{harm} and the action of $\Delta$ on the righthand side of \eqref{klf1} that $\log|\eta_{\Gamma,i}(u)|$  is harmonic and therefore so is $\text{Im}(\log \eta_{\Gamma,i}(u))$. We see also from \eqref{klf1} that $\text{Im}(\log \eta_{\Gamma,i}(u))$ is $\Gamma$-invariant and has rapid decay at cusps. Then using spectral properties of the Laplace operator, namely that fact any $\Gamma$-invariant harmonic function that square integrable over a fundamental domain of $\Gamma$ is constant  \cite[Corollary 3.3.4]{EGM1}, we conclude that $\text{Im}(\log \eta_{\Gamma,i}(u))$ is independent of $u$.\footnote{We thank the reviewer for suggesting this proof of independence.}
\end{proof}


Following \cite[\S3.3]{EGM1}, we define an automorphic function on $\HH$ for the group $\Gamma$ with parameter $\lambda \in\C$ to be a $\Gamma$-invariant, twice-differentiable, complex-valued function $f$ on $\HH$ satisfying  $-\Delta f = \lambda f$ and $f(A^{-1}u)=O(r^K)$ for some fixed constant $K>0$ as $r\to\infty$, uniformly with respect to $z\in\C$.

\begin{cor}
\label{hom}
$\log|\eta_{\Gamma,i}(u)|$  is a harmonic automorphic function with respect to $A_i\Gamma A_i^{-1}$ of weight $\frac12$ with automorphy factor $ \log (|cz+d|^2+|c|^2r^2)$, and the map
\[
D_{\Gamma,i}: A\Gamma_i A^{-1} \to \mathbb{R}
\]
is a homomorphism.
\end{cor}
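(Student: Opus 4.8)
The plan is to read off the first assertion from results already in hand, and to derive the homomorphism property of $D_{\Gamma,i}$ from the transformation law of Proposition \ref{transdef} together with the cocycle relation satisfied by the automorphy factor.

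For the first assertion, harmonicity of $\log|\eta_{\Gamma,i}(u)| = \mathrm{Re}(\log\eta_{\Gamma,i}(u))$ is immediate from Lemma \ref{harm}: since $\Delta$ is a real operator it annihilates the real and imaginary parts of a harmonic function separately, so $\log|\eta_{\Gamma,i}|$ satisfies $-\Delta f = \lambda f$ with $\lambda = 0$. The transformation behaviour with automorphy factor $\log(|cz+d|^2+|c|^2r^2)$ and weight $\tfrac12$ is precisely \eqref{logtrans}, so here the role of $\Gamma$-invariance in the definition preceding the corollary is played by this transformation law. Finally the growth condition $f = O(r^K)$ can be read off the defining series \eqref{eta}: its constant term is a multiple of $r^2$ while the remaining Fourier coefficients decay exponentially in $r$ by virtue of $K_0$, so $\log|\eta_{\Gamma,i}(u)| = O(r^2)$ as $r\to\infty$. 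This gives the harmonic automorphic form of weight $\tfrac12$ in the stated sense.

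For the homomorphism property, fix $M_1,M_2\in A_i\Gamma A_i^{-1}$; since this set is a group, $M_1M_2$ again lies in it and Proposition \ref{transdef} applies to each of $M_1$, $M_2$, and $M_1M_2$. Writing $\mu(M,u) = |cz+d|^2+|c|^2r^2$ for the automorphy factor attached to the bottom row $(c,d)$ of $M$, I would compute $\log\eta_{\Gamma,i}((M_1M_2)u)$ in two ways: once by applying Proposition \ref{transdef} directly to $M_1M_2$, and once by writing $(M_1M_2)u = M_1(M_2u)$ and applying the proposition first to $M_1$ at the point $M_2u$ and then to $M_2$ at $u$. Cancelling the common term $\log\eta_{\Gamma,i}(u)$, the desired relation $D_{\Gamma,i}(M_1M_2) = D_{\Gamma,i}(M_1) + D_{\Gamma,i}(M_2)$ reduces to the cocycle identity $\log\mu(M_1M_2,u) = \log\mu(M_2,u) + \log\mu(M_1,M_2u)$.

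The main step is therefore this cocycle identity, which I would verify not by brute-force substitution into \eqref{rM} but via the composition law for the $r$-coordinate. By \eqref{rM} one has $\mu(M,u) = r/r_M$, where $r_M$ is the $r$-coordinate of $Mu$; applying this with $v = M_2u$ and using $(M_1M_2)u = M_1v$ gives $r_{M_1M_2} = r_v/\mu(M_1,v) = r/(\mu(M_2,u)\mu(M_1,M_2u))$, whence $\mu(M_1M_2,u) = \mu(M_2,u)\mu(M_1,M_2u)$, and the logarithmic form follows since $\mu(M,u)$ is a positive real and its logarithm is unambiguous. A final point to check is that $\log\eta_{\Gamma,i}$ is genuinely single-valued, which it is, being defined by the convergent series \eqref{eta}; this lets the two computations of $\log\eta_{\Gamma,i}((M_1M_2)u)$ be equated as honest complex numbers and their imaginary parts compared with no ambiguity modulo $2\pi$. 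Comparing imaginary parts then yields the homomorphism, with $D_{\Gamma,i}$ real-valued by Proposition \ref{transdef}.
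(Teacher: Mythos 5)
Your proof is correct and follows the same overall strategy as the paper: read off harmonicity and the transformation law from Lemma \ref{harm} and Proposition \ref{transdef}, then expand $\log\eta_{\Gamma,i}((M_1M_2)u)$ in two ways and reduce the homomorphism property to the cocycle identity for the automorphy factor. The one place you genuinely diverge is in how that cocycle identity is established: the paper verifies $\mu(M_1M_2,u)=\mu(M_2,u)\,\mu(M_1,M_2u)$ by substituting the explicit formulas \eqref{rM} for $z_{M}$ and $r_{M}$ and checking directly against $c''=ca'+dc'$, $d''=cb'+dd'$, whereas you observe that $\mu(M,u)=r/r_M$ and deduce the identity from associativity of the group action, $r_{M_1M_2}=r_{M_2}/\mu(M_1,M_2u)$. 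Your route is cleaner and makes the cocycle property conceptually transparent, at the cost of relying on the fact that the quaternionic fractional-linear action is an honest group action; the paper's computation is self-contained but opaque. Two further small merits of your write-up: you verify the growth condition $O(r^K)$ from the Fourier expansion \eqref{eta} (the paper does not address it explicitly), and you note that $\log\eta_{\Gamma,i}$ is single-valued so that the imaginary parts can be compared without ambiguity modulo $2\pi$, a point the paper passes over in silence. No gaps.
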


\begin{proof}
The transformation property follows from Proposition \ref{transdef}, whereas the harmonicity follows the same argument as Lemma \ref{harm}. Finally, let $M,N\in A\Gamma_i A^{-1}$ and write
\[
M = \begin{pmatrix}a & b \\ c & d\end{pmatrix}, \quad N = \begin{pmatrix}a' & b' \\ c' & d'\end{pmatrix}, \quad MN = \begin{pmatrix}a'' & b'' \\ c'' & d''\end{pmatrix}.
\]
Then applying Proposition \ref{transdef} we can expand $\log\eta_{\Gamma,i}(MN u)$ on the one hand as 
\[
 \log\eta_{\Gamma,i}(u) + \frac12\log (|c''z+d''^2|+|c''|^2r^2) + \pi \sqrt{-1} D_{\Gamma,i}(MN),
\]
and on the other hand as
\begin{align*}
& \log\eta_{\Gamma,i}(Nu) + \frac12\log (|cz_M+d|^2+|c|^2r_M^2) + \pi \sqrt{-1} D_{\Gamma,i}(M)\\
=\ & \log\eta_{\Gamma,i}(u) + \frac12\log (|cz_M+d|^2+|c|^2r_M^2) + \frac12\log (|c'z+d'|^2+|c'|^2r^2) \\
&\qquad + \pi \sqrt{-1} D_{\Gamma,i}(M) + \pi \sqrt{-1} D_{\Gamma,i}(N).
\end{align*}
Then using the property of the group action of $\Gamma$ on $\HH$ or by direct computation, one checks that 
\begin{align*}
&\log (|cz_M+d|^2+|c|^2r_M^2) + \log (|c'z+d'|^2+|c'|^2r^2) \\
=\ &\log \left(\left|c\frac{(az+b)(\widebar{cz+d})+a\widebar{c}r^2}{|cz+d|^2+|c|^2r^2} +d\right|^2+\left(\frac{|c|r}{|cz+d|^2+|c|^2r^2}\right)^2\right)\\
& + \log\left(|c'z+d'|^2+|c'|^2r^2\right) \\
= \ & \log (|c''z+d''|^2+|c''|^2r^2),
\end{align*}
using the fact that $c'' = ca'+dc'$ and $d'' = cb' + dd'$. We thus conclude that 
\[
D_{\Gamma,i}(MN) =D_{\Gamma,i}(M)  + D_{\Gamma,i}(N),
\]
and the homomorphism property follows.
\end{proof}

We record some simple properties  that follow as  immediate consequences of the homomorphism property.

\begin{cor}
The function $D_{\Gamma,i}$ satisfies
\begin{itemize}
\item[(i)] $D_{\Gamma,i}(M) = - D_{\Gamma,i}(M^{-1})$
\item[(ii)] $D_{\Gamma,i}(M) = D_{\Gamma,i}(N)$ if $M,N\in\Gamma$ are conjugate.
\end{itemize}
\end{cor}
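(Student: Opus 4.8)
The plan is to derive both identities directly from the homomorphism property established in Corollary \ref{hom}. For part (i), I would apply the homomorphism to the product $M M^{-1} = I$, giving
\[
D_{\Gamma,i}(M) + D_{\Gamma,i}(M^{-1}) = D_{\Gamma,i}(I).
\]
It therefore suffices to show $D_{\Gamma,i}(I) = 0$. This follows by applying Proposition \ref{transdef} to $M = I$: since $Iu = u$ and the automorphy factor $\tfrac12\log(|cz+d|^2+|c|^2r^2)$ vanishes when $c=0, d=1$ (the factor being $\tfrac12\log 1 = 0$), the transformation law reduces to $\log\eta_{\Gamma,i}(u) = \log\eta_{\Gamma,i}(u) + \pi\sqrt{-1}\,D_{\Gamma,i}(I)$, forcing $D_{\Gamma,i}(I)=0$. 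Rearranging the first displayed equation then yields $D_{\Gamma,i}(M) = -D_{\Gamma,i}(M^{-1})$.

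For part (ii), I would use the standard fact that a homomorphism into the abelian group $\R$ is a class function, i.e. constant on conjugacy classes. Concretely, if $N = PMP^{-1}$ for some $P \in A_i\Gamma A_i^{-1}$, then by the homomorphism property
\[
D_{\Gamma,i}(N) = D_{\Gamma,i}(P) + D_{\Gamma,i}(M) + D_{\Gamma,i}(P^{-1}).
\]
Applying part (i) to cancel $D_{\Gamma,i}(P) + D_{\Gamma,i}(P^{-1}) = 0$ leaves $D_{\Gamma,i}(N) = D_{\Gamma,i}(M)$, as required.

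One point of care: the statement phrases conjugacy in terms of $M, N \in \Gamma$, whereas $D_{\Gamma,i}$ is defined on $A_i\Gamma A_i^{-1}$. I would interpret the hypothesis consistently so that both $M$ and $N$ (and the conjugating element) lie in $A_i\Gamma A_i^{-1}$, since this is the domain on which $D_{\Gamma,i}$ is a homomorphism; the conjugating element must belong to the same group for the argument to apply. I do not anticipate any genuine obstacle here, as both parts are purely formal consequences of the homomorphism property and the triviality of $D_{\Gamma,i}$ on the identity. The only mild subtlety is confirming that $D_{\Gamma,i}(I) = 0$, which I resolve above via direct substitution into Proposition \ref{transdef}.
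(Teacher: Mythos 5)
Your proof is correct and matches the paper's intent: the paper states these properties as immediate consequences of the homomorphism property of Corollary \ref{hom}, which is precisely the argument you give (including the verification that $D_{\Gamma,i}(I)=0$ and the class-function observation). Your remark about reading $M,N$ and the conjugating element as lying in $A_i\Gamma A_i^{-1}$ is a sensible clarification of the statement.
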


\noindent We note that the first property can be seen as the analogue of the identity 
\[
D(c,d) = -D(\bar{c},\bar{d})
\]
satisfied by the usual elliptic Dedekind sums \cite[Theorem 1]{ito2}.

\section{Comparison with the limit formula of a modified Eisenstein series}
\label{egm}
For the remainder of this paper, we fix $\Gamma = \text{PSL}(2,\mathcal O_K)$. In this case, we can compare our results with the limit formula obtained in \cite[Theorem 5.3]{EGM2}, where the function $g(u)$ of (5.17) in the reference is analogous to the classical $\log|\eta(z)|$, whereas our $\eta_{\Gamma,i}(u)$ is defined so as to obtain a function without the absolute value. 
To explain the relation, we first introduce some notation. Let $\mathcal{M}$ be the set of all fractional ideals of $\mathcal O_K$. Then for any $\mathfrak m\in\mathcal M, u \in \mathbb H, s\in \mathbb C$ with Re$(s)>1$, define 
\[
\hat E_{\mathfrak m}(u,s) =  N(\mathfrak m)^{1+s}\sum_{\substack{c,d\in\mathfrak m\\ (c,d)\neq (0,0)}}\left(\frac{r}{|cz+d|^2+|c|^2r^2}\right)^{1+s}
\]
and 
\[
\zeta(\mathfrak m, \mathfrak n, s) = N(\mathfrak m\mathfrak n^{-1})^s\sum_{\substack{\lambda\in \mathfrak m\mathfrak n^{-1}\\ \lambda\neq0}}N(\lambda)^{-s}.
\]
The latter zeta function was studied earlier also by Lang \cite{lang}. Both functions depend only on the equivalence class of $\mathfrak m $ in the ideal class group $\mathcal M/K^*$. By \cite[Theorem 8.3.1]{EGM1}, $\hat E_{\mathfrak m}(u,s)$ is holomorphic for all $s$, except for a simple pole at $s=1$ with residue $4\pi^2/|d_K|$
They relate to our earlier Eisenstein series by the relations
\be
\label{Eu1}
 E_{\mathfrak u_A}(u,s) = 2N(\mathfrak u_A)^{1+s}E_A(u,s) 
\ee
and
\be
\label{Eu2}
\hat E_{\mathfrak m}(u,s) = \frac{1}{|\mathcal O^*_K|}\sum_{\mathfrak u_A\in \mathcal M/K^*}\zeta(\mathfrak m,\mathfrak u_A,s+1)E_{\mathfrak u_A}(u,s),
\ee
 by Proposition 8.1.4 and Theorem 8.1.7 of \cite{EGM1}, where for $A\zeta = \infty$ and $A = (\begin{smallmatrix}a & b \\ c & d\end{smallmatrix})$, $\mathfrak u_A$ is the ideal generated by $c,d$ as an $\mathcal O_K$-module.
Note that the number of cusps is equal to the class number $h$ of $K$, which agrees with the length of the sum.

Now the limit formula given in \cite[Theorem 8.3.3]{EGM1} takes the form 
\[
\lim_{s\to1}\left(\hat E_{\mathfrak m}(A_i^{-1}u,s)-\frac{4\pi^2}{|d_K|}\frac{1}{s-1}\right) = \frac{4\pi^2}{|d_K|}g(u) + \dots
\]
where as in \cite[(8.3.17)]{EGM1},
\begin{align*}
g(u) &= \frac{|d_K|}{4\pi^2}\zeta(\mathfrak m,\mathfrak u,2)N(\mathfrak u)^2r^2 \\
&+ 2N(\mathfrak u)\sum_{0\neq w\in\mathfrak u^2}|w|\sigma_{-1}(\mathfrak m,\mathfrak u,w)r K_1\left(\frac{4\pi|w|r}{\sqrt{|d_K|}}\right)e^{2\pi i \langle 2\bar w|d_K|^{-1/2},z\rangle}
\end{align*}
with
\[
\sigma_{s}(\mathfrak a,\mathfrak b,w) = N(\mathfrak a)^{-s}\sum_{\substack{\lambda\in\mathfrak a\mathfrak b\\ w\in\lambda\mathfrak a^{-1}\mathfrak b}}N(\lambda)^s.
\]
The function $g(u)$ is a harmonic function satisfying the transformation property
\[
g(Mu) + \log (|cz+d|^2+|c|^2r^2) = g(u)
\]
parallel to \eqref{logtrans}. In comparison,  the term that would have been subtracted from our Eisenstein series in our formulation in Theorem \ref{klf} would be, by \eqref{humb}, 
\[
 \lim_{s \to 1}\left(E_i(A_i^{-1}u,s) - \frac{4\pi^2|\Lambda_i'|}{|d_K|^{3/2}\zeta_K(2)}\cdot\frac{1}{s-1}\right) = -2C_{\Gamma_i}\log|\eta_{\Gamma_i}(u)|+\dots.
\]
We can then express $g(u)$ as a linear combination of $\log|\eta_{\text{PSL}(2,\mathcal O_K),i}(u)|$. 

\begin{prop}
\label{geta}
Let $\Gamma = \mathrm{PSL}(2,\mathcal O_K)$, and let $\mathfrak u_i = \mathfrak u_{A_i}, i=1,\dots,h$ be a complete set of representatives of  $ \mathcal M/K^*$.  We have
\[
g(u) =  -\frac{|d_K|}{\pi^2|\mathcal O^*_K|}\sum_{i=1}^hN(\mathfrak u_i)^{2}\zeta(\mathfrak m,\mathfrak u_i,2) C_{\Gamma,i}\log|\eta_{\Gamma,i}(u)| + B_\Gamma(r),
\]
where $B_\Gamma(r)$ is the function \eqref{Bg} depending on $\Gamma$ and $r$.
\end{prop}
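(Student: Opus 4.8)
The plan is to start from the exact functional identity obtained by combining \eqref{Eu1} and \eqref{Eu2}. Writing the sum over ideal classes in $\mathcal M/K^*$ as a sum over the cusps $\zeta_i = A_i^{-1}\infty$ and abbreviating $c_i(s) = \frac{2}{|\mathcal O_K^*|}\zeta(\mathfrak m,\mathfrak u_i,s+1)N(\mathfrak u_i)^{1+s}$, this reads
\[
\hat E_{\mathfrak m}(u,s) = \sum_{i=1}^h c_i(s)\,E_i(u,s), \qquad \mathrm{Re}(s)>1,
\]
an identity of $\Gamma$-invariant functions on $\HH$. Each $c_i(s)$ is holomorphic at $s=1$ with $c_i(1) = \frac{2}{|\mathcal O_K^*|}\zeta(\mathfrak m,\mathfrak u_i,2)N(\mathfrak u_i)^2$, while each $E_i(\cdot,s)$ has a simple pole at $s=1$ with constant residue $|\Lambda_i'|\mathrm{vol}(\Gamma)^{-1}$ by Proposition \ref{Eis}(iii). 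First I would record the resulting pole-cancellation $\sum_i c_i(1)|\Lambda_i'|\mathrm{vol}(\Gamma)^{-1} = 4\pi^2/|d_K|$, matching the residue of $\hat E_{\mathfrak m}$.

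Next I would subtract the principal part $\frac{4\pi^2}{|d_K|(s-1)}$ from both sides and let $s\to1$. On the left the limit formula of \cite[Theorem 8.3.3]{EGM1} produces $\frac{4\pi^2}{|d_K|}g(u)$ together with terms depending only on $r$. On the right I would Laurent-expand each product $c_i(s)E_i(u,s)$ about $s=1$: the regular factor contributes $c_i(1)$ times the constant term of $E_i$, plus a cross term $c_i'(1)|\Lambda_i'|\mathrm{vol}(\Gamma)^{-1}$ that is independent of $z$. By Theorem \ref{klf} the constant term of $E_i$ equals $\frac{\pi}{|\Lambda_i|}b_{ii} - C_{\Gamma,i}\log r - 2C_{\Gamma,i}\log|\eta_{\Gamma,i}(u)|$, so the only $z$-dependent contribution on the right is $-2\sum_i c_i(1)C_{\Gamma,i}\log|\eta_{\Gamma,i}(u)|$.

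Collecting the $z$-dependent parts then forces
\[
\frac{4\pi^2}{|d_K|}g(u) = -2\sum_{i=1}^h c_i(1)C_{\Gamma,i}\log|\eta_{\Gamma,i}(u)| + (z\text{-independent}),
\]
and solving for $g(u)$ with $c_i(1) = \frac{2}{|\mathcal O_K^*|}\zeta(\mathfrak m,\mathfrak u_i,2)N(\mathfrak u_i)^2$ produces exactly the stated coefficient $-\frac{|d_K|}{\pi^2|\mathcal O_K^*|}$. Every remaining term — the EGM $r^2$, $\log r$ and constant terms, the constants $\frac{\pi}{|\Lambda_i|}b_{ii}$, the cross terms $c_i'(1)|\Lambda_i'|\mathrm{vol}(\Gamma)^{-1}$, and the $r^2$-contributions sitting inside each $\log|\eta_{\Gamma,i}(u)|$ — is independent of $z$, and I would package their net difference as the function $B_\Gamma(r)$ of \eqref{Bg}.

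The step I expect to be most delicate is the reconciliation of cusp coordinates, since Theorem \ref{klf} identifies the constant term of $E_i$ with $\log|\eta_{\Gamma,i}|$ only after conjugation by $A_i$, so the term-by-term reading above must be justified as an identity in the argument $u$ rather than $A_iu$. To make this airtight I would instead argue that the difference
\[
g(u) + \tfrac{|d_K|}{\pi^2|\mathcal O_K^*|}\sum_{i=1}^h N(\mathfrak u_i)^2\zeta(\mathfrak m,\mathfrak u_i,2)C_{\Gamma,i}\log|\eta_{\Gamma,i}(u)|
\]
is a harmonic function, by Lemma \ref{harm} and the harmonicity of $g$, whose nonconstant Fourier coefficients all vanish by the matching above; it therefore depends on $r$ alone, as in the spectral argument used in Proposition \ref{transdef}. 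Verifying that these combined nonconstant (Bessel-type) Fourier coefficients genuinely cancel — which is where the precise content of the master identity feeds in — is the computational heart of the proof.
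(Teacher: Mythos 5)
Your proposal is correct and follows essentially the same route as the paper: combine \eqref{Eu1} and \eqref{Eu2} into the single identity \eqref{eisid}, match the simple poles at $s=1$, subtract the principal parts, and compare the constant terms of the two limit formulas (Theorem \ref{klf} on one side, \cite[Theorem 8.3.3]{EGM1} on the other), absorbing all $z$-independent contributions into $B_\Gamma(r)$. Your explicit tracking of the Laurent cross-terms $c_i'(1)|\Lambda_i'|\mathrm{vol}(\Gamma)^{-1}$ and your flagging of the $u$ versus $A_i^{-1}u$ normalization are more careful than the paper's own terse proof, which simply asserts that the limit formulas may be compared.
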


\begin{proof}
From the preceding discussion, we relate the two Eisenstein series by \eqref{Eu1} and \eqref{Eu2},
\be
\label{eisid}
\hat E_{\mathfrak m}(u,s) = \frac{1}{|\mathcal O^*_K|}\sum_{\mathfrak u_A\in \mathcal M/K^*}2N(\mathfrak u_A)^{1+s}\zeta(\mathfrak m,\mathfrak u_A,s+1) E_A(u,s).
\ee
The righthand sum is finite and therefore the poles at $s=1$ of both sides must coincide as meromorphic functions in $s$. Thus subtracting the leading terms on either side of the equation, we may compare the limit formulas to deduce the identity. The function  $B_\Gamma(r)$ is given by the difference of the term ${\pi}b_{ii}/|\Lambda_i|$ in \eqref{klf1} and the remaining terms in \cite[Theorem 8.3.3]{EGM1}, namely
\be
\label{Bg}
\frac{4\pi^2}{|d_K|}(2\gamma - 1 - \log|d_K|-\log (r ) - \frac16 \log \tilde{g}(\mathfrak{m})),
\ee
where $\tilde g(\mathfrak m) = (2\pi)^{-12}N(\mathfrak m)^6|\Delta(\mathfrak m)|$ and $\Delta = g^3_2 - 27g^2_3$ is the discriminant from the theory of elliptic functions. Also, $\gamma$ is the usual Euler-Mascheroni constant.
\end{proof}

As a byproduct, we have the following expression for the special value of the Dedekind zeta function.

\begin{cor}
With notation as above, we have
\[
\zeta_K(2)= |d_K|^{-1/2} \sum_{i=1}^h  {|\Lambda_i'|} = |d_K|^{-1/2} \sum_{i=1}^h  N(\mathfrak u_{i})^{2}.
\]
\end{cor}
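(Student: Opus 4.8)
The plan is to extract the identity by comparing the principal parts at $s=1$ on both sides of the relation \eqref{eisid}. On the left, $\hat E_{\mathfrak m}(u,s)$ has a simple pole at $s=1$ with residue $4\pi^2/|d_K|$. On the right, each factor $N(\mathfrak u_A)^{1+s}$ is entire and the Dirichlet series $\zeta(\mathfrak m,\mathfrak u_A,s+1)$ is holomorphic at $s=1$, since its only pole occurs when its argument equals $1$, that is at $s=0$; hence the pole on the right comes solely from the Eisenstein series $E_A = E_i$, whose residue at $s=1$ is $|\Lambda_i'|\,\mathrm{vol}(\Gamma)^{-1}$ by Proposition \ref{Eis}(iii). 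Equating residues yields, for each class $\mathfrak m$,
\[
\frac{4\pi^2}{|d_K|} = \frac{2}{|\mathcal{O}_K^{*}|\,\mathrm{vol}(\Gamma)}\sum_{i=1}^h N(\mathfrak u_i)^2\,\zeta(\mathfrak m,\mathfrak u_i,2)\,|\Lambda_i'|.
\]

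To pass from this weighted relation to the asserted closed form I would remove the class-dependent weights $\zeta(\mathfrak m,\mathfrak u_i,2)$ by summing over a complete set of representatives of $\mathcal M/K^*$. The key input is the evaluation $\sum_{\mathfrak m}\zeta(\mathfrak m,\mathfrak u,s) = |\mathcal{O}_K^{*}|\,\zeta_K(s)$, which one proves by writing $\zeta(\mathfrak m,\mathfrak u,s)$ as $|\mathcal{O}_K^{*}|$ times the partial Dedekind zeta function of the class $[\mathfrak u\mathfrak m^{-1}]$ — grouping the nonzero $\lambda\in\mathfrak u\mathfrak m^{-1}$ according to their principal ideals $(\lambda)$ and the $|\mathcal{O}_K^{*}|$ associates producing each — and then letting $[\mathfrak m]$ run over the class group so that $[\mathfrak u\mathfrak m^{-1}]$ sweeps out every ideal class. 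Summed over $\mathfrak m$, the weights collapse to a single factor of $\zeta_K(2)$, producing a closed relation among the quantities $|\Lambda_i'|$, $N(\mathfrak u_i)$ and $\mathrm{vol}(\Gamma)$.

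I would then substitute Humbert's formula \eqref{humb}, $\mathrm{vol}(\Gamma)=|d_K|^{3/2}\zeta_K(2)/(4\pi^2)$, to cancel the residual $\zeta_K(2)$ and $\mathrm{vol}(\Gamma)$ factors and isolate $\zeta_K(2)$ in terms of $\sum_i|\Lambda_i'|$. The second equality $\sum_i|\Lambda_i'| = \sum_i N(\mathfrak u_i)^2$ I would obtain cusp by cusp from the explicit description of the cusps of $\mathrm{PSL}(2,\mathcal{O}_K)$ in \cite{EGM1,EGM2}: the cusp $\zeta_i$ is attached to the ideal class $[\mathfrak u_i]$, and the covolume of the dual of its stabiliser lattice $\Lambda_i$ equals $N(\mathfrak u_i)^2$, where one uses the duality $|\Lambda_i|\,|\Lambda_i'|=1$.

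The main obstacle is the bookkeeping of constants. One must reconcile the ideal-theoretic normalisation of $E_{\mathfrak u_A}$ in \eqref{Eu1} (the factor $2N(\mathfrak u_A)^{1+s}$) with the geometric normalisation of our $E_A$, track the factors of $|\mathcal{O}_K^{*}|$ and $2$ through the averaging over $\mathfrak m$, and verify the precise lattice identity $|\Lambda_i'|=N(\mathfrak u_i)^2$ against the chosen scaling matrices $A_i$. In particular, the weighted residue identity above is genuinely $\mathfrak m$-dependent, so the delicate point is ensuring that summation over the class group, rather than an individual class, is what yields an $\mathfrak m$-independent statement compatible with the normalisations in \cite{EGM1}. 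Once these constants are pinned down consistently, both equalities in the statement follow simultaneously from the residue comparison.
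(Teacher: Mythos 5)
Your first step---the residue comparison at $s=1$ of \eqref{eisid} \emph{keeping} the weights $2N(\mathfrak u_i)^{1+s}\zeta(\mathfrak m,\mathfrak u_i,s+1)/|\mathcal O_K^*|$---is correct, and is in fact more careful than the paper's own proof, which passes from \eqref{eisid} directly to the unweighted identity $\frac{4\pi^2}{|d_K|}=\sum_i\frac{4\pi^2|\Lambda_i'|}{|d_K|^{3/2}\zeta_K(2)}$ for a single class $\mathfrak m$, so that the $\zeta_K(2)$ in the statement enters only through Humbert's formula \eqref{humb}; the second equality is then read off from $\Lambda_i'=2|d_K|^{-1/2}\bar{\mathfrak u}_i^2$, essentially as you propose. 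Your additional move of averaging over $\mathfrak m$ is genuinely different from the paper, and the input $\sum_{[\mathfrak m]}\zeta(\mathfrak m,\mathfrak u,s)=|\mathcal O_K^*|\zeta_K(s)$ is right, but the averaging does not land on the stated identity, and the discrepancy is structural rather than a matter of bookkeeping.

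Concretely, summing your weighted residue identity over the $h$ classes gives
\[
h\cdot\frac{4\pi^2}{|d_K|}=\frac{2\,\zeta_K(2)}{\mathrm{vol}(\Gamma)}\sum_{i=1}^hN(\mathfrak u_i)^2\,|\Lambda_i'|,
\]
and substituting \eqref{humb} cancels $\zeta_K(2)$ from the two places it now appears, leaving $h\,|d_K|^{1/2}=2\sum_iN(\mathfrak u_i)^2|\Lambda_i'|$. Compared with the corollary this carries an extra weight $N(\mathfrak u_i)^2$ inside the sum, an extra factor of $h$, and---fatally for your stated goal---no $\zeta_K(2)$ left to isolate: the plan to ``cancel the residual $\zeta_K(2)$ \dots\ and isolate $\zeta_K(2)$'' is self-contradictory, because the only source of $\zeta_K(2)$ is Humbert's formula and your averaging manufactures a second copy that removes it. The real gap is therefore the passage from your (correct) $\mathfrak m$-dependent identity to the unweighted one used in the paper: one must justify discarding the factor $2N(\mathfrak u_i)^2\zeta(\mathfrak m,\mathfrak u_i,2)/|\mathcal O_K^*|$ altogether, and averaging over $\mathfrak m$ does not accomplish this. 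You should also flag that your careful version of the residue computation is in direct tension with the displayed equation in the paper's proof, which omits these weights; resolving that tension, not tuning constants, is what a complete argument requires.
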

\begin{proof}
This  follows directly from comparing residues on either side of \eqref{eisid} at $s=1$,
\[
\frac{4\pi^2}{|d_K|} = \sum_{i=1}^h \frac{4\pi^2|\Lambda_i'|}{|d_K|^{3/2}\zeta_K(2)}.
\]
The second equality follows from the identity
$
|\Lambda_i'| = N(\mathfrak u_{i})^{2}
$
as an application of (8.2.16) in \cite{EGM1}, 
$
\Lambda'_i = 2|d_K|^{-1/2}\bar{\mathfrak{u}}^2_i,
$
and the fact that $|\mathcal O_K^*| = \frac12|d_K|^{-1/2}$.
\end{proof}

\noindent As a side note, we remark that one also has $|\Lambda_i| = \frac12|d_K|^{1/2}N(\mathfrak u_i)^{-2}$ by \cite[(8.2.12)]{EGM1}.

\subsubsection*{Acknowledgments}
The first two authors were supported by the Smith College Summer Research Fellowship (SURF) Program. The third author was partially supported by NSF grant DMS-2212924, and thanks Jorge Fl\'orez for helpful discussions related to the paper. The authors are grateful to the referee for careful readings of the paper, and helpful comments that greatly strengthened the results of the paper.




\begin{thebibliography}{KonS8}

\bibitem[Asa70]{Asai}
T. Asai, On a certain function analogous to ${\rm log}|{\eta }\,(z)|$, Nagoya Math. J. {\bf 40} (1970), 193--211.

\bibitem[Bur17]{b1}
C. Burrin, Generalized Dedekind sums and equidistribution ${\rm mod}\, 1$, J. Number Theory {\bf 172} (2017), 270--286. MR3573154

\bibitem[BJO'SS20]{b2}
C. Burrin\ et al., Transformation laws for generalized Dedekind sums associated to Fuchsian groups, Acta Arith. {\bf 196} (2020), no.~2, 139--162. MR4146372

\bibitem[EGM85]{EGM2}
J. Elstrodt, F. Grunewald\ and\ J. Mennicke, Eisenstein series on three-dimensional hyperbolic space and imaginary quadratic number fields, J. Reine Angew. Math. {\bf 360} (1985), 160--213. MR0799662


\bibitem[EGM98]{EGM1}
J. Elstrodt, F. Grunewald\ and\ J. Mennicke, {\it Groups acting on hyperbolic space}, Springer Monographs in Mathematics, Springer-Verlag, Berlin, 1998. MR1483315

\bibitem[FKW19]{fkw}
J. Fl\'{o}rez, C. Karabulut\ and\ T. A. Wong, Eisenstein cocycles over imaginary quadratic fields and special values of $L$-functions, J. Number Theory {\bf 204} (2019), 497--531. 

\bibitem[Gol73]{GLD}
Goldstein, Larry Joel, {\it Dedekind sums for a Fuchsian group}. I. Nagoya Math. J. 50 (1973), 21--47. 

\bibitem[HIvPT19]{HIPT}
S. Herrero\ et al., A Jensen-Rohrlich type formula for the hyperbolic 3-space, Trans. Amer. Math. Soc. {\bf 371} (2019), no.~9, 6421--6446. MR3937331

\bibitem[Ito87a]{ito}
H. Ito, A function on the upper half space which is analogous to the imaginary part of ${\rm log}\,\eta(z)$, J. Reine Angew. Math. {\bf 373} (1987), 148--165. 

\bibitem[Ito87b]{ito2}
H. Ito, On a property of elliptic Dedekind sums, J. Number Theory {\bf 27} (1987), no.~1, 17--21. MR0904003

\bibitem[JO'SS20]{b3}
J. Jorgenson, C. O'Sullivan\ and\ L. Smajlovi\'{c}, Modular Dedekind symbols associated to Fuchsian groups and higher-order Eisenstein series, Res. Number Theory {\bf 6} (2020), no.~2, Paper No. 22, 42 pp. 

\bibitem[Lan94]{lang}
S. Lang, {\it Algebraic number theory}, second edition, Graduate Texts in Mathematics, 110, Springer-Verlag, New York, 1994. MR1282723

\bibitem[MOS66]{magnus}
W. Magnus, F. Oberhettinger\ and\ R. P. Soni, {\it Formulas and theorems for the special functions of mathematical physics}, third enlarged edition, Die Grundlehren der mathematischen Wissenschaften, Band 52, Springer-Verlag New York, Inc., New York, 1966.

\bibitem[Scz84]{sczech}
R. Sczech, Dedekindsummen mit elliptischen Funktionen, Invent. Math. {\bf 76} (1984), no.~3, 523--551. MR0746541

\bibitem[Sor02]{Sor}
C. M. Sorensen, Fourier expansion of Eisenstein series on the Hilbert modular group and Hilbert class fields, Trans. Amer. Math. Soc. {\bf 354} (2002), no.~12, 4847--4869. MR1926839

\bibitem[SVY]{SVY}
T. Stucker, A. Vennos\ and\ M. P. Young, Dedekind sums arising from newform Eisenstein series, Int. J. Number Theory {\bf 16} (2020), no.~10, 2129--2139. MR4167772

\end{thebibliography}
\end{document}